\def\PP{{\mathbb P}}
\def\EE{{\mathbb E}}
\newcommand{\nbR}{\mathbbm{R}}
\newcommand{\nbN}{\mathbbm{N}}
\newcommand{\nbC}{\mathbbm{C}}
\newcommand{\nbu}{\mathbbm{1}}
\newcommand{\nbP}{\mathbbm{P}}
\newcommand{\nbZ}{\mathbbm{Z}}
\newcommand{\nbE}{\mathbbm{E}}
\newtheorem{theorem}{{\bf Theorem}}
\newtheorem{lemm}[theorem]{{\bf Lemma}}
\newtheorem{cor}[theorem]{{\bf Corollary}}
\newtheorem{remark}[theorem]{\it Remark}
\newtheorem{prop}[theorem]{\bf Proposition}
\newtheorem{example}[theorem]{\it Example}
\begin{document}

\begin{frontmatter}

\title{High order expansions for renewal functions and applications to ruin theory}
\runtitle{Expansions for renewal functions}


\author{\fnms{Dombry} \snm{Cl\'ement}\ead[label=e1]{Clement.Dombry@univ-fcomte.fr}}
\and
\author{\fnms{Rabehasaina} \snm{Landy}\ead[label=e2]{Landy.Rabehasaina@univ-fcomte.fr}}
\address{Universit\'e de Bourgogne Franche-Comt\'e, Laboratoire de Math\'ematiques, \\ CNRS UMR 6623,
UFR Sciences et Techniques,\\
16 route de Gray,
25030 Besan\c{c}on cedex, France.\\
\printead{e1}\\
\printead{e2}}

\runauthor{Dombry, Rabehasaina}

\begin{abstract}
A high order expansion of the renewal function is provided under the assumption that the inter-renewal time distribution is light tailed with finite moment generating function $g$  on a neighborhood of $0$. This expansion relies on complex analysis and is expressed in terms of the residues of the function $1/(1-g)$. Under the assumption that $g$ can be extended into a meromorphic function on the complex plane and some technical conditions, we obtain even an exact expansion of the renewal function. An application to risk theory is given where we consider high order expansion of the ruin probability for the standard compound Poisson risk model. This precises the well known Cr\'amer-Lundberg approximation of the ruin probability when the initial reserve is large.
\end{abstract}

\begin{keyword}[class=MSC]
\kwd[Primary ]{60K05}
\kwd{}
\kwd[; secondary ]{60K10}
\end{keyword}

\begin{keyword}
\kwd{renewal process}
\kwd{renewal function}
\kwd{ ruin probability}
\kwd{ Cram\'er-Lundberg approximation}
\kwd{compound Poisson model}
\end{keyword}

\end{frontmatter}


\section{Introduction}
Let $(X_k)_{k\in \nbN}$ be an i.i.d. sequence of non negative random variables with common cumulative  distribution $F$.
The arrival times $(S_n)_{n\in\nbN}$ are defined by  $S_0=0$ and, for $n\geq 1$, $S_n=\sum_{k=1}^n X_k$. 
We consider the counting process $N$ defined by
\[
N(x):= \sum_{n\geq 0} 1_{\{S_n\leq x\}},\quad x\ge 0,\\
\]
and the associated renewal function
\[
U(x):= \EE[N(x)]=\sum_{n=0}^\infty F^{\ast n}(x),\quad x\ge 0.
\]
The renewal Theorem states that if the inter-arrival distribution $F$ has a finite first moment $\mu:=\EE(X_1)$, then 
$$U(x)\sim \frac{x}{\mu}\quad \mbox{as }x\to+\infty.$$
Recall that the distribution $F$ is called lattice if $F$ is supported by $h\mathbb{N}=\{0,h,2h,\ldots\}$ for some mesh $h>0$. It is well known (see for example Asmussen \cite[Proposition 6.1]{A03}) that if $F$ has a finite second moment $\mu_2:=\EE(X_1^2)$, then
\begin{equation}\label{eq:firstorder}
 U(x)=\left\{\begin{array}{cc}\frac{x}{\mu}+\frac{\mu_2+\mu}{2\mu^2}+o(1)& \mbox{if } F\mbox{ is lattice} \\
              \frac{x}{\mu}+\frac{\mu_2}{2\mu^2}+o(1)& \mbox{otherwise} 
             \end{array}\right.
 \quad \mbox{as }x\to+\infty.
\end{equation}
When $F$ has infinite first or second moment, Teugels \cite{Teugels} provides asymptotics for the renewal function $U$ under some regular variation conditions. In this paper, we focus on the case when $F$ is light-tailed and we assume that $X_1$ has some finite exponential moment so that 
\begin{equation}\label{def_R}
 R:=\sup\left\{r\geq 0;\ \int_0^\infty e^{rx}dF(x)<\infty\right\} >0.
\end{equation}
Then the moment generating function
\[
g(z):=\int_0^\infty e^{zx}dF(x)
\]
is well defined and holomorphic on 
\[
S_R=\{z\in\mathbb{C}, \Re(z)<R\}.
\]
Using complex analysis, Stone \cite{Stone} proved that under the strong non lattice condition
\begin{equation}\label{eq:stone_strong_lattice}
\limsup_{\theta \to\pm \infty} \left|\frac{1}{1-g(i\theta)}\right|<\infty,
\end{equation}
there exists some $r>0$ such that 
\begin{equation}\label{expansion_Stone}
U(x)=\frac{x}{\mu}+\frac{\mu_2}{2\mu^2}+o(e^{-rx}) \quad  \mbox{as }x\to\infty.
\end{equation}
Up to now, relatively few results concern either expansions or closed form expressions for $U(x)$. It appears that the only case where closed form expansions are available are when $F$ is Matrix exponential distributed, in which case an expression of $U(x)$ is given in Asmussen and Bladt \cite{AB}. Mitov and Omey \cite{Mitov_Omey} provide heuristics on many terms asymptotics of $U(x)$ that are verified on the already known cases. However, as the authors point out, those interesting expansions are only given formally and are not proved. Other expansions are available in \cite[Theorem 4]{DS} in the context of potential densities of L\'evy processes in terms of the associated L\'evy jump distribution. The approach by Stone \cite{Stone} for obtaining Expansion (\ref{expansion_Stone}) is mainly based on complex analysis and proved fruitful for obtaining expansions involving survival functions of random sums, see Blanchet and Glynn \cite{BG}. The approach in \cite{Stone} was later generalized for spread out distributions in \cite{Stone3}; note that the generalization of our results to spread out distribution is not available in the present paper because the main technical assumption that enable us to obtain higher expansions (namely, Assumption (\ref{condA}) thereafter), which is stronger than the non lattice condition (\ref{eq:stone_strong_lattice}), is not verified for spread out distributions. This is not really surprising, as it is on the other hand known that a spreadout distribution is strongly non lattice (see e.g. Proposition 1.6 p.189 of \cite{A03}), which is why many results in renewal theory that hold for strongly non lattice distributions also hold for spreadout distributions. 

We consider in this paper both the lattice and the non lattice cases and, in view of Equation \eqref{eq:firstorder}, we introduce the function 
\[
 v(x):= \left\{\begin{array}{cc}U(x)-\frac{x}{\mu}-\frac{\mu_2+\mu}{2\mu^2}& \mbox{if } F\mbox{ is lattice} \\
              U(x)-\frac{x}{\mu}-\frac{\mu_2}{2\mu^2}& \mbox{if } F\mbox{ is non-lattice}
             \end{array}\right..
\]
Following Stone's approach but with more detailed computations, we obtain higher order expansions for the function $v$  both in the lattice and non-lattice cases (Theorems \ref{thm_expansion_v_lattice} and \ref{thm_expansion_v} respectively). When $g$ has a meromorphic extension to the whole complex plane and under some technical conditions, we are even able to provide exact expansions for $v$ (Corollary \ref{cor_expansion_exact}). 

The paper is structured as follows. Section 2 presents our results on high order expansions for the renewal function as well as some examples. Section 3 is devoted to applications to ruin theory: we provide asymptotics of the ruin probability in the setting of continuous or discrete time risk processes and also consider a two dimensional risk process. Proofs are gathered in Sections 4 and 5.

\section{High order expansions for the renewal function}\label{sec:renewal}
\subsection{Main results}\label{sec:main_results}
In the sequel, the solutions of the equation $g(z)=1$ play a major role. Note that $0$ is the unique real solution in $S_R$ and that other solutions satisfy $\Re(z)\geq 0$ and come in pairs, i.e. if $z$ is a solution then so is $\bar z$.

We consider first the lattice case and we assume without loss of generality that the mesh of the distribution is equal to $h=1$, i.e.  $F$ is supported  by $\mathbb N$. In this case, the moment generating function $z\mapsto g(z)$ is $2i\pi$-periodic on $S_R=\{z\in\mathbb{C}, \Re(z)<R\}$ and we introduce the fundamental domain 
$S_{R}^f=\{ z\in \nbC;\ \Re (z)<R, -\pi\leq \Im(z)\leq \pi\}$.  
\begin{theorem}\label{thm_expansion_v_lattice}
Let $R_0\in (0,R)$ be such that Equation $g(z)=1$ has no root verifying $\Re(z)=R_0$. Let us denote by $z_0=0, z_1,\ldots, z_N$ the solutions of this equation  in $S_{R_0}^f$. Then, $v(k)$ has the asymptotic expansion 
\begin{equation}\label{expansion_v_lattice}
v(k)=\sum_{j=1}^N \mathrm{Res}\left(\frac{e^{-kz}}{(e^z-1)(1-g(z))};z_j\right) + o(e^{-R_0 k}),\quad k\to+\infty,\ k\in\nbN,
\end{equation}
where the notation $\mathrm{Res}(f(z);z_j)$ denotes the residue of the meromorphic function $f$ at pole $z_j$.\\ 
If $z_j$ is a simple zero of $g-1$, i.e.  $g'(z_j)\neq 0$, then the $j$-th term in \eqref{expansion_v_lattice} has the simple form
\[
-\frac{e^{-kz_j}}{(e^{z_j}-1)g'(z_j)}.
\]
\end{theorem}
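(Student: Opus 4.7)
The strategy I would follow is classical: express $v(k)$ as a contour integral of its (meromorphic) generating function, shift the contour to the right past $z_1,\ldots,z_N$, and estimate the residual integral by the Riemann--Lebesgue lemma.

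First I would construct the generating function. Since \eqref{eq:firstorder} gives $v(k)=o(1)$, the series
\[
\tilde v(z) := \sum_{k\ge 0} v(k)\, e^{kz}
\]
converges absolutely on $\{\Re(z)<0\}$. Using the standard identities $\sum_k U(k) e^{kz} = [(1-e^z)(1-g(z))]^{-1}$, $\sum_k k\, e^{kz} = e^z/(1-e^z)^2$ and $\sum_k e^{kz} = 1/(1-e^z)$, one obtains
\[
\tilde v(z) = \frac{1}{(1-e^z)(1-g(z))} - \frac{e^z}{\mu(1-e^z)^2} - \frac{\mu_2+\mu}{2\mu^2(1-e^z)}.
\]
The right-hand side is meromorphic and $2i\pi$-periodic on $S_R$; a Taylor expansion at the origin (using $g(z) = 1 + \mu z + \mu_2 z^2/2 + O(z^3)$ and $1/(1-e^z) = -1/z + 1/2 + O(z)$) verifies that the particular constant $(\mu_2+\mu)/(2\mu^2)$ is exactly what is needed to cancel the double pole of the first term at $z_0=0$. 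Hence the poles of $\tilde v$ inside $S_{R_0}^f$ are precisely $z_1,\dots,z_N$, and at each such $z_j$ only the first term of the above display contributes to the residue.

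Next I would invert and shift. For any $\sigma<0$, Fourier inversion on the torus of the absolutely convergent sequence $(v(k) e^{k\sigma})_k$ yields
\[
v(k) = \frac{1}{2\pi i}\int_{\sigma - i\pi}^{\sigma + i\pi} e^{-kz}\,\tilde v(z)\,dz, \qquad k\in\nbN.
\]
Applying the residue theorem to the positively oriented rectangle with vertices $\sigma\pm i\pi$, $R_0\pm i\pi$, and observing that $e^{-kz}\tilde v(z)$ is $2i\pi$-periodic for $k\in\nbZ$ (since $e^{-2i\pi k}=1$) so that the two horizontal sides cancel, one obtains after rewriting $1/(1-e^z)=-1/(e^z-1)$
\[
v(k) = \sum_{j=1}^N \mathrm{Res}\!\left(\frac{e^{-kz}}{(e^z-1)(1-g(z))};\, z_j\right) + \frac{1}{2\pi i} \int_{R_0 - i\pi}^{R_0 + i\pi} e^{-kz}\,\tilde v(z)\,dz.
\]
On the shifted contour $|e^{-kz}|=e^{-kR_0}$; since $g(z)\neq 1$ there by hypothesis while $e^z\neq 1$ (because $R_0>0$), the map $\theta\mapsto \tilde v(R_0+i\theta)$ is continuous and $2\pi$-periodic, hence in $L^1([-\pi,\pi])$, and the Riemann--Lebesgue lemma gives $\int_{-\pi}^\pi e^{-ik\theta}\tilde v(R_0+i\theta)\,d\theta\to 0$, so the residual integral is $o(e^{-R_0 k})$. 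Finally, if $g'(z_j)\neq 0$ then $1/(1-g)$ has a simple pole at $z_j$ with residue $-1/g'(z_j)$, and the $j$-th term collapses to $-e^{-kz_j}/[(e^{z_j}-1)g'(z_j)]$.

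The main obstacle will be upgrading from the trivial $O(e^{-R_0 k})$ sup-bound on the residual integral to the announced $o(e^{-R_0 k})$: this genuinely requires the oscillatory cancellation encoded in the Riemann--Lebesgue lemma and is the whole reason for working with the $2i\pi$-periodic generating function $\tilde v$ on a torus rather than with a generic Laplace transform. The other delicate step is the algebraic verification that the constant $(\mu_2+\mu)/(2\mu^2)$ in the definition of $v$ is precisely the one that kills the order-two pole of the generating function at $z_0=0$, so that no residue contribution is picked up there.
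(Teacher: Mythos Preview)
Your proof is correct and in fact more streamlined than the paper's own argument, so this is worth a brief comparison.

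The paper works at the level of the renewal mass $u_k=U(k)-U(k-1)$. It writes $u_k=\lim_{r\to 1^-}\sum_{n\ge 0}r^n\mathbb P(S_n=k)$, turns each probability into a Fourier integral, and then needs a dedicated limiting lemma (a ``weak convergence'' result for $\int_{-\pi}^\pi \Re[f(\theta)/(1-rg(i\theta))]\,d\theta$ as $r\to 1^-$) to pass to the limit and obtain an integral formula for $u_k$ on the segment $\Re(z)=0$. The singularity at $\theta=0$ is then removed by subtracting the $\delta_1$-renewal identity, after which the contour is shifted exactly as you do; finally $v(k)$ is recovered from $u_{k+m+1}-1/\mu$ by a telescoping sum over $m\ge 0$.

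Your route sidesteps all of this. By forming the generating function of $v(k)$ directly, the subtraction of $k/\mu+(\mu_2+\mu)/(2\mu^2)$ is already built in, so the Laurent calculation at $z=0$ you perform shows $\tilde v$ is holomorphic there, and the integral representation on a line $\Re(z)=\sigma<0$ is available immediately from $\ell^1$-Fourier inversion---no regularization, no limiting lemma. From there the contour shift, cancellation of the horizontal edges by $2i\pi$-periodicity, and the Riemann--Lebesgue step are identical in spirit to the paper's. The price you pay is the explicit second-order expansion at $z=0$, which is routine; in return you avoid the paper's Lemma~\ref{lemma_weak_conv} entirely. Both approaches land on the same residual integral on $\Re(z)=R_0$ and the same Riemann--Lebesgue argument for the $o(e^{-R_0 k})$.
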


Next we consider the case when $F$ is non-lattice. It is well known that a distribution is non-lattice if and only if $\theta=0$ is the unique real solution of the equation $g(i\theta)=1$. 
We will need here the following stronger  technical assumption: for all $R_0< R$,
\begin{equation}\label{condA}
\limsup_{\theta\to\pm\infty}\sup_{0\leq r\leq R_0} \left|\frac{1}{1-g(r+i\theta)}\right| <\infty.
\end{equation}
Assumption \eqref{condA} is stronger than the strong lattice condition (\ref{eq:stone_strong_lattice}) from Stone \cite{Stone}. It is however not too restrictive and satified by a large class of distributions as shown by the following propositon.
\begin{prop}\label{prop_A}
Suppose that the distribution $F$ is absolutely continuous with respect to the Lebesgue measure. Then Assumption \eqref{condA} is satisfied for all $R_0<R$.
\end{prop}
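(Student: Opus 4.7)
The plan is to interpret $g(r+i\theta)$ as a Fourier transform and apply a uniform version of the Riemann--Lebesgue lemma. Let $f$ denote the density of $F$. For $r\in[0,R_0]$ define
\[
h_r(x) := e^{rx}f(x)\mathbf{1}_{\{x\geq 0\}},
\]
so that $g(r+i\theta)=\int_0^\infty e^{i\theta x}h_r(x)\,dx = \widehat{h_r}(-\theta)$. Since $R_0<R$, each $h_r$ lies in $L^1(\mathbb{R})$, and the pointwise Riemann--Lebesgue lemma gives $\widehat{h_r}(\theta)\to 0$ as $|\theta|\to\infty$ for each fixed $r$. The whole difficulty is to make this decay uniform in $r\in[0,R_0]$.

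The key step is to show the map $r\mapsto h_r$ is continuous from $[0,R_0]$ to $L^1(\mathbb{R}_+)$. The elementary bound $|e^{rx}-e^{r'x}|\leq |r-r'|\,x\,e^{R_0 x}$ combined with the fact that $xe^{R_0 x}\leq C_{R'} e^{R' x}$ for any chosen $R'\in(R_0,R)$ yields
\[
\|h_r-h_{r'}\|_1 \leq C_{R'}|r-r'|\int_0^\infty e^{R'x}f(x)\,dx \to 0 \text{ as } r'\to r,
\]
the last integral being finite by definition of $R$ (since $R'<R$). So $\{h_r:r\in[0,R_0]\}$ is the continuous image of a compact set, hence compact in $L^1$.

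From here I would upgrade Riemann--Lebesgue to a uniform statement by a finite-cover argument: given $\varepsilon>0$, by uniform continuity there exist $r_1,\ldots,r_n\in[0,R_0]$ such that every $r\in[0,R_0]$ satisfies $\|h_r-h_{r_i}\|_1<\varepsilon/2$ for some $i$. Since each $\widehat{h_{r_i}}$ vanishes at infinity, there exists $\Theta_i$ with $|\widehat{h_{r_i}}(\theta)|<\varepsilon/2$ for $|\theta|>\Theta_i$. Setting $\Theta:=\max_i\Theta_i$ and using $|\widehat{h_r}(\theta)-\widehat{h_{r_i}}(\theta)|\leq\|h_r-h_{r_i}\|_1$, we get $|g(r+i\theta)|=|\widehat{h_r}(-\theta)|<\varepsilon$ uniformly in $r\in[0,R_0]$ for $|\theta|>\Theta$.

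Taking $\varepsilon=1/2$, this uniform decay yields $|1-g(r+i\theta)|\geq 1/2$ for all $r\in[0,R_0]$ and $|\theta|$ large enough, so $|1/(1-g(r+i\theta))|\leq 2$, which is exactly Assumption \eqref{condA}. The main obstacle is precisely this upgrade from pointwise to uniform Riemann--Lebesgue, which is resolved by the $L^1$-continuity of $r\mapsto h_r$; the only subtlety is requiring $R_0$ strictly less than $R$ so that one has a little bit of room ($R'\in(R_0,R)$) to absorb the polynomial factor $x$ coming from the difference quotient of the exponential.
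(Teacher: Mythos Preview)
Your argument is correct, and both proofs establish the same intermediate claim, namely that
\[
\sup_{0\leq r\leq R_0}|g(r+i\theta)|\longrightarrow 0\quad\mbox{as }|\theta|\to\infty,
\]
from which Assumption~\eqref{condA} is immediate. The routes, however, are genuinely different. The paper proceeds via complex analysis: it considers the family of holomorphic functions $f_\theta(z):=g(z+i\theta)$ on the strip $\{0\le\Re(z)\le R_0\}$, observes that this family is uniformly bounded by $\nbE[e^{R_0X}]$, and invokes a normal-families result (Vitali-type theorem, from Schiff's book) to upgrade the pointwise Riemann--Lebesgue convergence to uniform convergence on the compact segment $[0,R_0]\subset\nbR$. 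Your approach stays entirely within real analysis: you exhibit $r\mapsto h_r=e^{r\cdot}f$ as a Lipschitz map from $[0,R_0]$ into $L^1$, deduce that $\{h_r\}$ is $L^1$-compact, and then run an $\varepsilon/2$ finite-cover argument against the Fourier transform. Your method is arguably more elementary and self-contained (no appeal to normal families), at the cost of the small extra bookkeeping of choosing an intermediate exponent $R'\in(R_0,R)$ to control the factor~$x$; the paper's argument is shorter once one is willing to cite the relevant complex-analytic black box.
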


Our main result in the non-lattice case is the following theorem.
\begin{theorem}\label{thm_expansion_v}
Consider a strictly non-lattice distribution $F$ satisfying  assumption \eqref{condA}. Then for all $R_0\in(0,R)$, the equation $g(z)=1$ 
has a finite number of solutions in $S_{R_0}=\{z\in\mathbb{C};\ \Re(z)<R_0\}$ that we denote by $z_0=0, z_1,\ldots,z_N$. Then, supposing that $R_0$ is such that there is no solution to that equation verifying $\Re(z)=R_0$, $v(x)$ has the asymptotic expansion 
\begin{equation}\label{expansion_v}
v(x)=\sum_{j=1}^N \mathrm{Res}\left(\frac{e^{-xz}}{z(1-g(z))};z_j\right) + o(e^{-R_0x})\quad \mbox{as } x\to+\infty. 
\end{equation}
If $z_j$ is simple zero of $g-1$, i.e.  $g'(z_j)\neq 0$, then the $j$-th term in \eqref{expansion_v} has the simple form
\[
-\frac{e^{-xz_j}}{z_jg'(z_j)}.
\]
\end{theorem}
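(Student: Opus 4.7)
I would follow Stone's complex-analytic strategy, refined to obtain the sharper error term.

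\emph{Laplace transform.} For $\Re(s)>0$ direct computation yields
$$
\tilde v(s):=\int_0^\infty e^{-sx}v(x)\,dx=\frac{1}{s(1-g(-s))}-\frac{1}{\mu s^2}-\frac{\mu_2}{2\mu^2 s}.
$$
The subtractions $-1/(\mu s^2)$ and $-\mu_2/(2\mu^2 s)$ are exactly what cancel the double pole of $1/(s(1-g(-s)))$ at $s=0$, so $\tilde v$ extends holomorphically at $0$ and, more generally, as a meromorphic function on $\{-R<\Re(s)<R\}$ whose only poles are $s=-z_j$ for $j\geq 1$.

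\emph{Bromwich inversion and contour shift.} Applying Bromwich inversion at abscissa $c>0$ and pushing the contour to $\Re(s)=-R_0$ (via the residue theorem on the rectangle with vertices $c\pm iT$ and $-R_0\pm iT$, letting $T\to+\infty$) gives
$$
v(x)=\sum_{j=1}^N\mathrm{Res}\bigl(e^{sx}\tilde v(s);\,s=-z_j\bigr)+I(x),\quad I(x):=\frac{1}{2\pi i}\int_{-R_0-i\infty}^{-R_0+i\infty}e^{sx}\tilde v(s)\,ds.
$$
Assumption \eqref{condA} ensures $\tilde v(r+iT)\to 0$ uniformly for $r\in[-R_0,c]$, so the horizontal contributions vanish. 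For $j\geq 1$ the two corrective terms do not contribute to the residue at $s=-z_j$, and the substitution $w=-s$ rewrites that residue as $\mathrm{Res}\bigl(\frac{e^{-wx}}{w(1-g(w))};\,w=z_j\bigr)$, which is exactly the form in \eqref{expansion_v}; when $g'(z_j)\neq 0$ the explicit simple-pole computation produces $-e^{-xz_j}/(z_j g'(z_j))$.

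\emph{Tail estimate (the main obstacle).} It remains to prove $I(x)=o(e^{-R_0 x})$. Parametrizing $s=-R_0+i\theta$ reduces this to
$$
\int_{\mathbb R}e^{i\theta x}\tilde v(-R_0+i\theta)\,d\theta\longrightarrow 0\quad\text{as }x\to+\infty.
$$
Under \eqref{condA} the integrand is only $O(1/|\theta|)$ at infinity, hence not $L^1$, so Riemann--Lebesgue does not apply directly. Writing $\tilde v(s)=B(s)/s$ with $B(s)=\frac{1}{1-g(-s)}-\frac{1}{\mu s}-\frac{\mu_2}{2\mu^2}$ bounded on the contour, one isolates the apparent leading term $C/(-R_0+i\theta)$, where $C=\lim_{|\theta|\to\infty}B(-R_0+i\theta)$: since the pole $\theta=-iR_0$ of this function lies in the lower half-plane, closing the $\theta$-contour upward (by Jordan's lemma for $x>0$) gives $\int e^{i\theta x}C/(-R_0+i\theta)\,d\theta=0$. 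The subleading remainder is then handled by integration by parts in $\theta$ and a Riemann--Lebesgue type argument, exploiting that $R_0<R$ makes $g^{(k)}(R_0-i\theta)$ bounded in $\theta$ for every $k\geq 1$ and that \eqref{condA} bounds $1/(1-g(-s))$ uniformly on the contour. This precise cancellation of the ostensibly leading $1/|\theta|$ contribution is what upgrades an a priori $O(e^{-R_0 x})$ bound to the required $o(e^{-R_0 x})$ asymptotic, and is the main technical difficulty of the proof.
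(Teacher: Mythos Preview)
Your approach is precisely the informal heuristic the paper sketches (and explicitly warns against) at the end of \S\ref{sec:main_results}: direct Laplace inversion and a contour shift. The paper abandons it for Stone's Gaussian smoothing because of convergence issues that your plan does not actually resolve.

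There are two concrete gaps. First, the Bromwich identity
\[
v(x)=\frac{1}{2\pi i}\int_{c-i\infty}^{c+i\infty}e^{sx}\tilde v(s)\,ds
\]
is assumed, but on the line $\Re(s)=c$ the integrand is only $O(1/|\theta|)$, so the integral is at best conditionally convergent and its pointwise equality with $v(x)$ needs justification (this depends on regularity of $v$ that you do not establish). Second, and more seriously, your tail argument introduces $C=\lim_{|\theta|\to\infty}B(-R_0+i\theta)$, yet assumption~\eqref{condA} only gives a \emph{bounded $\limsup$} of $|1/(1-g(r+i\theta))|$, not the existence of a limit. Without that limit the decomposition $\tilde v(s)=C/s+(\tilde v(s)-C/s)$ collapses, and you have no reason for the remainder to be $L^1$ (or even to satisfy a Riemann--Lebesgue type conclusion after one integration by parts: bounded derivatives of $g$ do not by themselves imply that $\partial_\theta[1/(1-g)]$ is integrable).

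The paper's route avoids both problems by convolving with a Gaussian (Step~2), which makes every Fourier integral absolutely convergent, then carefully removing the smoothing via a sandwich argument (Steps~4--5) and recovering $v$ from $U(x,1)$ by telescoping (Step~6). The $o(e^{-R_0x})$ comes not from a single Riemann--Lebesgue step but from the freedom to take $r$ arbitrarily close to $R_0$ after controlling the smoothed error as $o(e^{-rx}(1+|\ln a|))$; this ``$r$ is arbitrary'' trick is what upgrades $O$ to $o$, and your plan has no analogue of it.
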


It is worth noting that Theorems \ref{thm_expansion_v_lattice} and \ref{thm_expansion_v} can be extended to obtain asymptotics of higher order, i.e. of order $e^{-rx}$ with $r>R$,  if we assume that the moment generating function $g$ has a meromorphic extension to $S_{\bar R}$ for some $\bar R>R$.  
Theorems \ref{thm_expansion_v_lattice} and \ref{thm_expansion_v} and their proofs extend in a straightforward way, but not Proposition \ref{prop_A}. 
In the case when the moment generating function $g$ has a meromorphic extension to the whole complex plane, i.e. $\bar R=+\infty$,
it is even possible under some technical assumption to get an exact expansion for the renewal function.

\begin{cor}\label{cor_expansion_exact}
Assume that $g(z)$ has a meromorphic extension to the whole complex plane. 
\begin{itemize}
 \item In the lattice case, we assume furthermore that
 \begin{equation}\label{eq:cond_lattice}
  \liminf_{r\to +\infty} \sup_{-\pi\leq\theta\leq \pi} \left|\frac{1}{e^{r}(1-g(r+i\theta))}\right|=0.
 \end{equation}
We denote by $z_0=0,z_1,\ldots,z_N$ (possibly $N=+\infty$) the solution of $g(z)=1$ in the fundamental domain  
 $S^f=\{ z\in \nbC;  -\pi\leq \Im(z)\leq \pi\}$ with $\Re(z_0)\leq \Re(z_1)\leq\cdots$. Then, we have the exact expansion
 \begin{equation}\label{exact_expansion_v_lattice}
 v(k)=\sum_{j=1}^N \mathrm{Res}\left(\frac{e^{-kz}}{(e^z-1)(1-g(z))};z_j\right),\quad k\geq 0.
 \end{equation}
\item In the non-lattice case, we suppose that assumption \eqref{condA} holds for all $R_0>0$. Let $x>0$. Let us furthermore suppose that
\begin{itemize}
\item One has an infinite number of roots $(z_n)_{n\in\nbN}$ of Equation $g(z)=1$, all of which are simple, real, and such that the following series converges:
\begin{equation}\label{exact_expansion_abs_convergence}
\sum_{j=1}^\infty \left|\mathrm{Res}\left(\frac{e^{-xz}}{z(1-g(z))};z_j\right)\right|= \sum_{j=1}^\infty \left|\frac{e^{-xz_j}}{z_j g'(z_j)} \right|<+\infty .
\end{equation}
\item The following holds:
\begin{equation}\label{eq:cond_nonlattice}
  \liminf_{r\to +\infty} \sup_{\theta\in\mathbb{R}}\left|\frac{1}{ r(1-g(r+i\theta))}\right|=0.
 \end{equation}
\end{itemize}
Then, we have the exact expansion
\begin{equation}\label{exact_expansion_v_non_lattice}
 v(x)=\sum_{j=1}^\infty \mathrm{Res}\left(\frac{e^{-xz}}{z(1-g(z))};z_j\right).
 \end{equation}
\end{itemize} 
\end{cor}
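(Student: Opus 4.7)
The approach is to push the asymptotic expansions of Theorems \ref{thm_expansion_v_lattice} and \ref{thm_expansion_v} all the way to $R_0=+\infty$. Because $g-1$ is meromorphic its zeros are isolated, and the hypotheses \eqref{eq:cond_lattice} (lattice case) and \eqref{eq:cond_nonlattice} (non-lattice case) guarantee a sequence $r_n\to+\infty$ along which the sup in the corresponding $\liminf$ tends to $0$ and which avoids every zero of $g-1$. The proof then amounts to showing that the remainder term in the finite expansion---which in each case appears as a Bromwich-type contour integral on the vertical line $\{\Re(z)=r_n\}$ left over after collecting residues---vanishes as $n\to\infty$ for every fixed argument.

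\textbf{Lattice case.} The remainder is an integral over the compact segment $[R_0-i\pi,R_0+i\pi]$, so a direct sup-estimate is enough. Its modulus is bounded by
\[
e^{-kR_0}\cdot \sup_{|\theta|\le\pi}\left|\frac{1}{(e^{R_0+i\theta}-1)(1-g(R_0+i\theta))}\right|.
\]
Since $|e^{R_0+i\theta}-1|\ge e^{R_0}-1$, this is at most a constant times $e^{-kR_0}\cdot \sup_{|\theta|\le\pi}|e^{R_0}(1-g(R_0+i\theta))|^{-1}$, which tends to $0$ along $r_n$ by \eqref{eq:cond_lattice}. Since $v(k)$ is fixed, the partial sums of residues must converge, and \eqref{exact_expansion_v_lattice} follows.

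\textbf{Non-lattice case.} The remainder is a contour integral along the full line $R_0+i\mathbb{R}$, of modulus at most $e^{-xR_0}\cdot I(R_0)$ where $I(R_0)=\int_{\mathbb{R}}|z(1-g(z))|^{-1}\,d\theta$ with $z=R_0+i\theta$. The plan is to split $\theta$ into $|\theta|\le T$ and $|\theta|>T$: on the bounded piece, \eqref{eq:cond_nonlattice} gives the uniform estimate $|z(1-g(z))|^{-1}\le \epsilon_n$ along $r_n$ (with $\epsilon_n\to 0$), producing a contribution $O(T\epsilon_n)$; on the tail, assumption \eqref{condA} controls $|1-g|^{-1}$ uniformly, and extra decay of the integrand (to be extracted from smoothness of $g$ on vertical lines, or equivalently by working with a primitive of the integrand or with an integration by parts built into the Bromwich representation of $v$) combines with the factor $|z|^{-1}$ to yield an integrable tail. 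Letting $n\to\infty$ and invoking the absolute convergence \eqref{exact_expansion_abs_convergence} to identify the limit series then gives \eqref{exact_expansion_v_non_lattice}.

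\textbf{Main obstacle.} The delicate step is the non-lattice tail estimate in $\theta$: the raw integrand $|z(1-g(z))|^{-1}$ only decays as $|\theta|^{-1}$ along $R_0+i\mathbb{R}$, which is not directly integrable, so closing the contour requires extracting additional decay from $g$ and reorganizing the Bromwich integral. This is the point at which the contour representations underlying Theorems \ref{thm_expansion_v_lattice} and \ref{thm_expansion_v} must be revisited rather than merely quoted; the lattice case, in contrast, reduces to a one-line sup-estimate over a compact segment.
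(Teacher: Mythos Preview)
Your lattice argument is essentially the paper's: it invokes the exact remainder identity \eqref{expansion_v_lattice_proof_c1} from the proof of Theorem~\ref{thm_expansion_v_lattice} and kills the integral over $[-\pi,\pi]$ along a sequence $r_n\to+\infty$ using \eqref{eq:cond_lattice}. That part is fine.

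The non-lattice case, however, has a real gap. You correctly diagnose that the naive Bromwich integrand $|z(1-g(z))|^{-1}$ decays only like $|\theta|^{-1}$ and hence is not integrable along $R_0+i\nbR$, but the remedies you float---integration by parts, primitives, ``smoothness of $g$ on vertical lines''---do not produce the needed decay. Assumption~\eqref{condA} only gives a uniform \emph{bound} on $|1-g|^{-1}$ for large $|\theta|$, not any decay of $g$ or $g'$, so IBP gains nothing; and there is no a priori Bromwich representation of $v$ with an integrable kernel to differentiate. The paper's actual mechanism is the Gaussian smoothing built into Stone's proof of Theorem~\ref{thm_expansion_v}: one returns to identity~\eqref{proof_thm_expansion_v_2_0} for $V(x,h,a)$, which carries the factor $e^{a^2(r+i\theta)^2/2}$, and sets $h=1$, $r=r_n$, $a=a_n(x)=e^{-r_nx}/x$. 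The Gaussian factor supplies $e^{-a_n^2\theta^2/2}$, making $\tilde I_1-\tilde I_2$ absolutely integrable in $\theta$; the resulting bound is then $\sup_\theta |r_n(1-g(r_n+i\theta))|^{-1}$ times a term that stays bounded, so \eqref{eq:cond_nonlattice} kills it. Separately one must check $V(x,1,a_n)\to U(x,1)$ (dominated convergence via \eqref{eq:bound}) and that the residue sum with the perturbation factors $e^{\pm e^{-r_nx}z_j}e^{a_n^2z_j^2/2}$ converges to the unperturbed sum; this last step is precisely where the hypotheses that the $z_j$ are \emph{real} and \emph{simple} and the absolute convergence \eqref{exact_expansion_abs_convergence} are used, to bound the perturbation uniformly in $j$ by $O(e^{-2r_nx}r_n^2)$ times the absolutely convergent series. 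Your outline does not account for how these structural hypotheses enter, and without the Gaussian kernel the contour integral simply does not close.
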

\begin{remark}\label{comment_referee}
{\rm One of the assumptions of Corollary \ref{cor_expansion_exact} in the non-lattice case is that there is an {\it infinite} number of roots of Equation $g(z)=1$. One may wonder what happens in the case when those roots are in finite number $N$. In fact, one shows in this case, and thanks to Condition (\ref{eq:cond_nonlattice}), that the moment generating function $g(z)$ is rational, in which case a finite expansion for $v(x)$ can be deduced almost straightforwardly.}
\end{remark}
In the previous results, a kind of dichotomy arises between the lattice and non-lattice cases. 
Interestingly, a unified statement can be deduced for the renewal mass function or the renewal density function in the non-lattice and lattice case respectively.
\begin{cor}\label{cor_expansion_exact_2}
Under the same assumptions as Corollary \ref{cor_expansion_exact}:
\begin{itemize}
 \item In the lattice case, the renewal measure has mass function
 \[
 u(k)=-\sum_{j=0}^N \mathrm{Res}\left(\frac{e^{-kz}}{1-g(z)};z_j\right),\quad k\geq 0. 
 \]
 As a particular case, if all the poles are simple, 
 \[
 u(k)=\sum_{j=0}^N \frac{e^{-kz_j}}{g'(z_j)},\quad k\geq 0.
 \]
 \item In the non-lattice case, the renewal measure has density function
 \begin{equation}
 u(x)=- \sum_{j=0}^N\mathrm{Res}\left(\frac{e^{-xz}}{1-g(z)};z_j\right),\quad x>0. \label{renewal_density_polynome}
 \end{equation}
 As a particular case, if all the poles are simple, 
 \begin{equation}
 u(x)=\sum_{j=0}^N \frac{e^{-xz_j}}{g'(z_j)},\quad x> 0.\label{renewal_density_polynome_simple_pole}
 \end{equation}
\end{itemize}
\end{cor}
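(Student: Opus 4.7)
My strategy is to deduce Corollary \ref{cor_expansion_exact_2} from Corollary \ref{cor_expansion_exact} by observing that in the lattice case the mass function $u(k)$ equals the increment $U(k)-U(k-1)$ for $k\ge 1$, while in the non-lattice case the density $u(x)$ equals $U'(x)$ for $x>0$. Applying this operation to the expansion of $v$ given by Corollary \ref{cor_expansion_exact} should produce exactly the claimed formula, provided the pole at $z_0=0$ (which accounted for the leading $1/\mu$ term in the renewal theorem and was therefore absent from the expansion of $v$) is reintegrated into the sum.

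In the lattice case, writing $U(k)=k/\mu+(\mu_2+\mu)/(2\mu^2)+v(k)$ gives
\[
 u(k)=\frac{1}{\mu}+v(k)-v(k-1).
\]
The key algebraic observation is that
\[
 \frac{e^{-kz}-e^{-(k-1)z}}{(e^z-1)(1-g(z))}=-\frac{e^{-kz}}{1-g(z)},
\]
so that taking residues in \eqref{exact_expansion_v_lattice} yields
\[
 v(k)-v(k-1)=-\sum_{j=1}^N\mathrm{Res}\bigl(e^{-kz}/(1-g(z));z_j\bigr).
\]
The local expansion $g(z)=1+\mu z+O(z^2)$ at the origin produces $\mathrm{Res}(e^{-kz}/(1-g(z));0)=-1/\mu$, so the remaining $1/\mu$ is exactly the missing $j=0$ term. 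The simple pole formula then follows from $(1-g)'(z_j)=-g'(z_j)$, applied both at $z_0=0$ (which recovers $1/g'(0)=1/\mu$) and at the other $z_j$.

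The non-lattice case proceeds in the same spirit, with differentiation in $x$ replacing differencing: formally $u(x)=1/\mu+v'(x)$, and differentiating \eqref{exact_expansion_v_non_lattice} term by term cancels the $1/z_j$ factor and produces the same structure. The main technical obstacle is that the absolute convergence in \eqref{exact_expansion_abs_convergence} does \emph{not} automatically survive term-by-term differentiation, since it is precisely the factor $1/z_j$ that ensures summability. To circumvent this, I would argue via integration rather than differentiation: for $0<x<y$, the identity $e^{-xz_j}-e^{-yz_j}=z_j\int_x^y e^{-tz_j}\,dt$ rewrites the simple-pole expansion of $v$ as
\[
 v(y)-v(x)=\sum_{j=1}^\infty \int_x^y \frac{e^{-tz_j}}{g'(z_j)}\,dt.
\]
The bound $|e^{-xz_j}-e^{-yz_j}|\le 2\,e^{-xz_j}$ (valid because the $z_j$ are real and positive) together with \eqref{exact_expansion_abs_convergence} licenses Fubini's theorem to swap sum and integral; identifying the resulting integrand with $u(t)-1/\mu$ first almost everywhere, then everywhere on $(0,\infty)$ by continuity on compacts, and finally absorbing $1/\mu=-\mathrm{Res}(e^{-xz}/(1-g(z));0)$ as the $j=0$ term, completes the proof.
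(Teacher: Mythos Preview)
Your lattice argument is essentially the paper's: both compute $u(k)=1/\mu+v(k)-v(k-1)$, use the identity $(e^{-kz}-e^{-(k-1)z})/(e^z-1)=-e^{-kz}$ to collapse the residues, and reabsorb $1/\mu$ as the $j=0$ term.

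In the non-lattice case the paper simply writes $u(x)=1/\mu+v'(x)$ and differentiates \eqref{exact_expansion_v_non_lattice} term by term without comment, whereas you correctly flag that \eqref{exact_expansion_abs_convergence} does not by itself license this and propose an integration/Fubini detour instead. Your Fubini step is sound: since the $z_j$ are real and positive, $\int_x^y e^{-tz_j}\,dt=(e^{-xz_j}-e^{-yz_j})/z_j\le e^{-xz_j}/z_j$, so the iterated absolute sum is dominated by the convergent series \eqref{exact_expansion_abs_convergence} at the point $x$. This already yields the density formula almost everywhere, which suffices for \eqref{renewal_density_polynome} as a statement about the Radon--Nikodym derivative.

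Your final step---upgrading to a pointwise identity via ``continuity on compacts''---needs one more ingredient that you do not spell out: for $0<s<t$ write $e^{-tz_j}/|g'(z_j)|=\bigl(z_je^{-(t-s)z_j}\bigr)\cdot e^{-sz_j}/(z_j|g'(z_j)|)$; since $z_j\to\infty$, the factor $z_je^{-(t-s)z_j}$ is bounded uniformly in $j$, so the differentiated series is dominated on any compact of $(0,\infty)$ by a constant times \eqref{exact_expansion_abs_convergence} evaluated at a smaller abscissa $s$. This gives uniform convergence on compacts and hence continuity. Note that once this estimate is in hand it also directly justifies the paper's bare term-by-term differentiation, so your route is more explicit about the analytic issue but ultimately hinges on the same bound.
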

To conclude this subsection, we present an informal argument leading to (and motivating) Expansion (\ref{expansion_v}) and that may lead to some better comprehension of proof of Theorem \ref{thm_expansion_v} given in Section \ref{sub_section_proof_theo_expansion}. One verifies, using Fubini, that the Laplace transform of $U(.)$ is
\begin{multline*}
\int_0^\infty e^{-xz} U(x) dx = \int_0^\infty e^{-xz} \left[ \sum_{k=0}^\infty \nbP(S_k\le x)\right] dx=\sum_{k=0}^\infty \nbE\left[ \int_{S_k}^\infty e^{-xz} dx\right]\\
= \sum_{k=0}^\infty \frac{g(-z)^k}{z}= \frac{1}{z(1-g(-z))},
\end{multline*}
so that, the inversion formula for the Laplace transform leads formally to
\begin{equation}\label{Mellin_transf}
U(x)=\frac{1}{2i\pi}\int_{c-i\infty}^{c+i\infty}\frac{e^{xz}}{z(1-g(-z))}dz
\end{equation}
where $c>0$ is such that all singularities of $\frac{e^{xz}}{z(1-g(-z))}$ are on the left of vertical line $c+i\nbR$. The right hand side integral of (\ref{Mellin_transf}) does not necessarily converge, however let us suppose that this is the case. The poles of $\frac{e^{xz}}{z(1-g(-z))}$ satisfying $\Re(z)>-R_0$ are $-z_0=0$,..., $-z_N$, and one can verify that the residue of $\dfrac{e^{xz}}{z(1-g(-z))}$ at $-z_0=0$ is $ \dfrac{x}{\mu}+ \dfrac{\mu_2}{2\mu^2}$. Thus, using a rectangular contour and the theorem of residues leads to the following
\begin{eqnarray}
U(x)&=& \sum_{j=0}^N \mathrm{Res}\left( \frac{e^{xz}}{z(1-g(-z))};-z_j\right) + \frac{1}{2i\pi}\int_{-R_0+i\infty}^{-R_0-i\infty}\frac{e^{xz}}{z(1-g(-z))}dz \nonumber\\
&=& \dfrac{x}{\mu}+ \dfrac{\mu_2}{2\mu^2} + \sum_{j=1}^N \mathrm{Res}\left( \frac{e^{-xz}}{z(1-g(z))};z_j\right)\nonumber\\
&& + \frac{1}{2i\pi}\int_{-R_0+i\infty}^{-R_0-i\infty}\frac{e^{xz}}{z(1-g(-z))}dz. \label{Mellin_transf2}
\end{eqnarray}
The last integral is an $o(e^{-R_0x})$, so that one would obtain Expansion (\ref{expansion_v}). However, the main failing points in this reasoning are first that the integral in (\ref{Mellin_transf}) is not convergent, and that the contour argument leading to (\ref{Mellin_transf2}) is more delicate than it seems. The convergence issue will be solved by introducing a gaussian kernel (an idea already introduced by Stone \cite{Stone, Stone2}) which, by inversion, will make the corresponding integral converge, see Step 2 in the proof in forthcoming Section \ref{sub_section_proof_theo_expansion}. The contour argument will involve Assumption (\ref{condA}), which will enable two of the pieces of the contour to vanish in the proof, see again Step 2 in Section \ref{sub_section_proof_theo_expansion} as well as the corresponding Figure \ref{figure_contour}.

\subsection{Examples}
We provide some examples that illustrate the results above.
\begin{example}{\rm
In the lattice case, we consider the negative binomial distribution with parameters $p\in (0,1)$ and $n\geq 1$ defined by
\[
F(dx)=\sum_{k\geq 0}\binom{k+n-1}{k} p^k(1-p)^{n}\delta_k(dx),\quad p\in (0,1), n\geq 1.
\]
Its moment generating function is given by
\[
g(z)=\left(\frac{1-p}{1-pe^z}\right)^n,\quad \Re(z)<R=-\log p,
\]
so that
\[
\frac{1}{1-g(z)}=\frac{(1-pe^z)^n}{(1-pe^z)^n-(1-p)^n}
\]
defines a meromorphic function on $\mathbb{C}$. The poles are the solutions of
\[
e^{z}=\frac{1-(1-p)e^{2i\pi\frac{j}{n}}}{p},\quad 0\leq j\leq n-1.
\]
For $n=1$ the only pole in the fundamental domain $S_{\bar R}^f$ is $z_0=0$ so that Theorem \ref{thm_expansion_v_lattice} implies  $v(x)=o(e^{-rx})$ for all $r>0$. 
In the general case $n\geq 1$, there are exactly $n$ poles in the fundamental domain $S^f$ given by $z_j=r_ke^{i\alpha_j}$ with
\[
r_j=\log\left|\frac{1-(1-p)e^{2i\pi\frac{j}{n}}}{p}\right| \quad \mbox{and}\quad \alpha_j=\mathrm{arg}\left(1-(1-p)e^{2i\pi\frac{j}{n}}\right).
\]
Furthermore, Assumption \eqref{eq:cond_lattice} is easily satisfied (comparison between an exponential and a power growth) so that Corollaries  
\ref{cor_expansion_exact} and \ref{cor_expansion_exact_2} apply. We obtain that the renewal measure has mass function
\[
 u(k)=\sum_{j=0}^{n-1} \frac{1-pe^{z_j}}{np(1-p)e^{z_j}}e^{-kz_j} \quad \mbox{with}\quad  e^{z_j}=\frac{1-(1-p)e^{2i\pi\frac{j}{n}}}{p}. 
\]
}\end{example}

\begin{example} {\rm In the non-lattice case let us consider {\it Matrix exponential distributions} with parameters $(\alpha,T)$, where $\alpha$ is an $\nbR^{1\times (N+1)}$ probability row vector and $T$ is an $\nbR^{(N+1)\times (N+1)}$ subintensity matrix. The definition and principal properties of Phase type distributions can be found in Asmussen and Albrecher \cite[Chapter IX]{AA}. The moment generating function $g(z)$ is a rational function given by
\begin{equation}
g(z)= \alpha (-zI-T)^{-1}s
\label{mgf_PH}
\end{equation}
where $s:=-T e$ and $e=(1,\ldots,1)^T$ (see \cite[Theorem 1.5]{AA}). The equation $g(z)=1$ is a polynomial equation with $N+1$ solutions $z_0=0,\ldots,z_N$ and Condition \eqref{condA} is satisfied with $R_0=+\infty$ and Assumption \eqref{eq:cond_nonlattice} is easily satisfied (comparison between an exponential and a power growth). From Corollaries \ref{cor_expansion_exact} and \ref{cor_expansion_exact_2}, we obtain a closed formula for the  renewal distribution $U$ and the renewal density $u$. On the other hand, Asmussen and Bladt \cite[Theorem 3.1]{AB} provide the simple expression
\begin{equation}
u(x)=\alpha e^{(s\alpha+T)x} s,\quad x>0 .
\label{renewal_density_PH}
\end{equation}
Let us check that this formula agrees with Corollary \ref{cor_expansion_exact_2} in the case when the roots $z_0,\ldots,z_N$ are simple. It is easy to check that the $-z_j$'s are exactly the eigenvalues of the matrix $s\alpha+T$. Denoting by $P$ the $(N+1)\times (N+1)$ matrix made up with the eigenvectors $v_1,\ldots,v_j\in \nbR^{N+1}$ corresponding to eigenvalues $z_0,\ldots,z_N$, we have
$$
s\alpha+T= P \Delta P^{-1}\quad\mbox{with}\quad \Delta:= \mbox{Diag}(-z_j,\ j=0,...,N).
$$
Denoting by $J_k$ the $\nbR^{(N+1)\times (N+1)}$ matrix with $1$ at the $(k,k)$th position and $0$ elsewhere, we compute
\begin{equation}\label{PH_1}
u(x)=\alpha e^{(s\alpha+T)x} s= \alpha P e^{\Delta x} P^{-1}s= \sum_{j=1}^{N+1} e^{-z_{j-1} x} \alpha P J_j P^{-1}s .
\end{equation}
On the other hand, we deduce from (\ref{mgf_PH}) 
\begin{multline*}
g'(z)=\alpha (-zI-T)^{-2}s= \alpha (-zI+s\alpha - P \Delta P^{-1})^{-2}s\\
= \alpha P (-zI+P^{-1} s\alpha P-  \Delta )^{-2} P^{-1}s.
\end{multline*}
Hence, in order to prove that  (\ref{PH_1}) and (\ref{renewal_density_polynome}) agree, we need to prove
\[
\alpha P J_j P^{-1}s=\frac{1}{g'(z_{j-1})}\quad \mbox{for all}\ j=1,\ldots,N+1
\]
or equivalently
\[
\alpha P J_j P^{-1}s \times \alpha P (-z_{j-1}I+P^{-1} s\alpha P-  \Delta )^{-2} P^{-1}s =1.
\]
This can be easily verified with elementary algebra (the relation $J_j(\Delta+z_{j-1}I)=0$ is useful).
}\end{example}

\begin{example}\label{example_uniform}
 {\rm  Let us consider the simple case of the {\it uniform distribution} on $[0,1]$, i.e. $X\sim {\cal U}[0,1]$. In that case $g(z)=\frac{e^z-1}{z}$ and
   the equation $g(z)=1$ is equivalent to $e^z=z+1$.  The solutions are $z_j=-W_j(-e^{-1})-1$, $j\in\nbZ$, where $W_j(.)$ is the $j$th generalized Lambert function. 
   Proposition \ref{prop_A} together with Theorem \ref{thm_expansion_v} provide an asymptotic expansion of $v$. Using the relations 
   $z_{-j}=\bar z_j$ and $g'(z_j)=1$, we obtain, for all $N\geq 1$, 
  \[
  v(x)=-2\sum_{j=1}^N \Re\left(\frac{1}{z_j}e^{-xz_j}\right)+o(e^{-r_Nx})\quad \mbox{as}\ x\to+\infty,  
  \] 
with $z_j=-W_j(-e^{-1})-1$ and $r_N=\Re(z_N)$. As $N\to+\infty$, $r_N\to+\infty$ so that the expansion has arbitrary high order. 
} \end{example}

At this point, there still lacks an example of distribution $X$ such that a meromorphic extension $g(z)$ exists, Equation $g(z)=1$ admits an infinite number of solution and infinite expansion (\ref{exact_expansion_v_non_lattice}) holds. An example of such an infinite expansion of $v(x)$ will be given in upcoming Example \ref{example_meromorphic}, in the context of meromorphic L\'evy processes.

\section{Application to ruin theory}\label{sec:ruin}
As an application of Theorem~\ref{thm_expansion_v}, we provide asymptotic expansions for the ruin probability in risk theory. We consider both a continuous setting (compound Poisson risk process) and a discrete setting (binomial risk process). Estimation of the ruin probability in a two-dimensional model is also investigated.

\subsection{Ruin theory in continuous time}\label{subsec_risk_continuous}
We consider the  following classical continuous time risk process
$$
R_t^x=x + ct - \sum_{k=1}^{N_t} Z_k=x+Y_t,\quad t\ge 0,
$$
with $\{N_t,\ t\ge 0 \}$ a Poisson process with intensity $\alpha>0$ and $(Z_k)_{k\in \nbN}$ an  i.i.d. sequence of non-negative random variables with common  distribution $G$ and finite expectation $m$, independent from $\{N_t,\ t\ge 0 \}$. Such a process models the capital of an insurance company with premium rate $c>0$, initial reserve $x$, and incoming claims $(Z_k)_{k\in \nbN}$, see e.g. Asmussen and Albrecher \cite{AA}. We define $\bar G=1-G$ the tail function and assume the following on the moment generating function
\begin{equation}\label{condB}
s\mapsto \int_0^\infty e^{sx}dG(x)\mbox{ is finite for all } s>0.
\end{equation}
We are interested in the ruin probability
\begin{equation}\label{def_ruin_proba}
\psi(x):=\PP\left( \inf_{t\ge 0}R_t^x <0\right),\quad x\ge 0.
\end{equation}
and its asymptotic when the initial reserve $x$ is large. It is well known that $\psi(x)<1$ if and only if the safety loading is positive, i.e. 
\begin{equation}\label{safety_loading}
\EE(Y_1)=c-\alpha m >0.
\end{equation}
In the asymptotic analysis, a key role is played by the Lundberg equation
\begin{equation}\label{Lundberg}
\int_0^\infty e^{zy}\frac{\alpha}{c} \bar G(y)\mathrm{d}y=1,\quad z\in\mathbb{C}.
\end{equation}
Under Assumption (\ref{safety_loading}), this equation restricted to real numbers admits a unique real solution denoted by $\kappa>0$. The Lundberg inequality states that 
\[
\psi(x)\leq e^{-\kappa x} \quad \mbox{for all}\ x>0,
\]
while the Cram\'er-Lundberg approximation provides the asymptotic behavior as $x\to+\infty$
\begin{equation}\label{eq:CL}
\psi(x)\sim C e^{-\kappa x}\quad \mbox{with}\quad C=\frac{c-\alpha m}{\EE\left(Ze^{\kappa Z} \right)-c}.
\end{equation}
We provide high order asymptotic expansions for the ruin probability $\psi(x)$. Similar considerations as well as exact expansions have been proved with different methods by Kuznetsov and Morales \cite{KM} for a so called meromorphic risk process  and by Roynette et al. \cite{RVV}. 

Using the fact that $\kappa>0$ solves the Lundberg Equation \eqref{Lundberg}, one can define the probability measure $F$ on $[0,+\infty)$ by
\[
F(dx):= e^{\kappa x}\frac{\alpha}{c} \bar{G}(x)dx.
\]
The moment generating function
\[
 g(z)=\int_0^\infty e^{zx}F(dx),\quad z\in\mathbb{C}
\]
is well defined and holomorphic on the complex plane. For future reference, note that the Lundberg equation \eqref{Lundberg} is equivalent to
\begin{equation}\label{Lundberg2}
 g(z-\kappa)=1,\quad z\in\mathbb{C}.
\end{equation}

Using a renewal equation solved by the ruin probability function $\psi$ and the asymptotic behavior of the renewal function provided by Theorem \ref{thm_expansion_v}, 
we can deduce an asymptotic expansion for $\psi(x)$ as $x\to+\infty$.
\begin{theorem}\label{prop_exp_ruin}
Assume conditions \eqref{condB} and \eqref{safety_loading} are satisfied.  Let $r>0$ be fixed and $z_0=0, z_1,\ldots,z_N$ the solutions of $g(z)=1$ in $S_{r}=\{z\in\mathbb{C};\ \Re(z)\leq r\}$. 
Then, the ruin probability $\psi(x)$ has the asymptotic expansion
\begin{equation}
\psi(x)=  \sum_{j=0}^N \mathrm{Res} \left(\frac{(\alpha m-c)e^{-x(z+\kappa)}}{c(1-g(z))(z+\kappa)};z_j\right) + o(e^{-(r+\kappa)x})\quad \mbox{as } x\to +\infty.\label{exp_ruin_new_continuous}
\end{equation}
If $z_j$ is a simple zero of $g-1$, i.e. $g'(z_j)\neq 0$, then the $j$-th term in \eqref{exp_ruin_new_continuous} has the simple form
\begin{equation}
-\frac{(\alpha m-c)}{cg'(z_j)(z_j+\kappa)}e^{-x(z_j+\kappa)}=\frac{c-\alpha m}{\alpha \EE\left( Z e^{(z_j+\kappa)Z}\right)-c}\, e^{-(z_j+\kappa) x}.\label{g_prime_not_zero_continuous_risk}
\end{equation}
\end{theorem}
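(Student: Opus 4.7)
The plan is to reduce the theorem to an application of Theorem~\ref{thm_expansion_v} via the defective renewal equation for $\psi$. From the compound Poisson structure one has the classical identity (see e.g.\ Asmussen--Albrecher)
\[
\psi(x)=\frac{\alpha}{c}\int_x^{\infty}\bar G(t)\,dt+\frac{\alpha}{c}\int_0^{x}\bar G(y)\,\psi(x-y)\,dy,
\]
which is defective because $(\alpha/c)\int_0^\infty\bar G=\alpha m/c<1$ by \eqref{safety_loading}. The Lundberg equation \eqref{Lundberg} says exactly that $F(dy):=(\alpha/c)e^{\kappa y}\bar G(y)\,dy$ has total mass one, so multiplying the above equation by $e^{\kappa x}$ yields the proper renewal equation
\[
h(x)=\tilde z(x)+\int_0^{x}h(x-y)\,dF(y),\qquad h(x):=e^{\kappa x}\psi(x),\quad \tilde z(x):=\frac{\alpha}{c}e^{\kappa x}\!\int_x^{\infty}\bar G(t)\,dt,
\]
whose unique locally bounded solution is $h=\tilde z*U$, with $U=\sum_{n\ge 0}F^{\ast n}$ the renewal measure of $F$. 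Under \eqref{condB}, $F$ is absolutely continuous with all exponential moments, so $g$ is entire and Proposition~\ref{prop_A} (applied to $F$) gives \eqref{condA} for every $R_0>0$; hence Theorem~\ref{thm_expansion_v} is available at level $R_0=r$ (in the extended version for entire $g$ mentioned right after that theorem).

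Inserting the expansion $U(y)=y/\mu+\mu_2/(2\mu^2)+v(y)$ into $h(x)=\int_{[0,x]}\tilde z(x-y)\,dU(y)$ splits $h$ into a leading term $\mu^{-1}\!\int_0^{x}\tilde z(u)\,du$ and residue contributions from $dv$. Since by \eqref{condB} and Markov's inequality $\bar G$, hence $\tilde z$, decays faster than any exponential, the leading term converges to $\hat{\tilde z}(0)/\mu$ up to super-exponentially small corrections. A short Fubini computation yields the key identity
\[
\hat{\tilde z}(-z)=\int_0^{\infty}e^{zx}\,\tilde z(x)\,dx=\frac{c\,g(z)-\alpha m}{c\,(z+\kappa)},
\]
which at any root $z_j$ of $g(z)=1$ simplifies to $(c-\alpha m)/(c(z_j+\kappa))$. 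In the simple-pole case the residue of $v$ at $z_j$ is $-e^{-yz_j}/(z_j g'(z_j))$, and the change of variable $u=x-y$ in $\int_0^{x}\tilde z(x-y)e^{-yz_j}\,dy=e^{-xz_j}\!\int_0^{x}\tilde z(u)e^{uz_j}\,du$ shows this term contributes $\hat{\tilde z}(-z_j)e^{-xz_j}/g'(z_j)$ up to super-exponentially small terms (multiple poles are handled by the analogous residue calculation). Assembling the pieces and multiplying by $e^{-\kappa x}$ produces the simplified form \eqref{g_prime_not_zero_continuous_risk}, while the residue form \eqref{exp_ruin_new_continuous} follows from $\mathrm{Res}\bigl((1-g(z))^{-1};z_j\bigr)=-1/g'(z_j)$. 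The $j=0$ term already reproduces the Cram\'er--Lundberg constant $C$; the equivalent expression in \eqref{g_prime_not_zero_continuous_risk} involving $\EE[Z e^{(z_j+\kappa)Z}]$ follows from the identity $c(z_j+\kappa)g'(z_j)=\alpha\EE[Ze^{(z_j+\kappa)Z}]-c$, itself obtained by integration by parts from the definition of $g$ together with $g(z_j)=1$.

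The main technical difficulty is the error bound: one must show that the convolution of $\tilde z$ against the remainder $\eta(y)=o(e^{-ry})$ from Theorem~\ref{thm_expansion_v} contributes $o(e^{-rx})$ to $h$, hence $o(e^{-(r+\kappa)x})$ to $\psi$. This is handled by splitting $[0,x]$ into $[0,x/2]$ and $[x/2,x]$: on the first piece the super-exponential decay of $\tilde z$ (inherited from $\bar G(u)\le e^{-r'u}\EE[e^{r'Z}]$ valid for every $r'>0$) is more than enough to beat any bound on $\eta$, while on the second piece one factors out the pointwise bound $|\eta(y)|\le\varepsilon e^{-ry}$, valid for $y\ge x/2$ with $x$ large and $\varepsilon$ arbitrary, and controls the remaining convolution by the finite quantity $\hat{\tilde z}(-r)$, letting $\varepsilon\to 0$ after $x\to\infty$.
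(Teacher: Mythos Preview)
Your strategy coincides with the paper's: pass to the proper renewal equation for $h=e^{\kappa x}\psi$, write $h=\tilde z*U$, split $U$ into its linear part and $v$, and feed in the expansion of $v$ from Theorem~\ref{thm_expansion_v}. Your Laplace-transform identity $\hat{\tilde z}(-z)=(c\,g(z)-\alpha m)/(c(z+\kappa))$ is a clean shortcut that replaces the paper's more explicit computation of the integrals arising from $z'$ and $v$.

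There is, however, a genuine gap in your error control. The contribution of $v$ to $h$ is the Stieltjes integral $\int_{[0,x]}\tilde z(x-y)\,dv(y)$, and after subtracting the residue pieces $R_j(y)$ you are left with $\int_{[0,x]}\tilde z(x-y)\,d\eta(y)$, where $\eta=v-\sum_j R_j$. Theorem~\ref{thm_expansion_v} only gives a \emph{pointwise} bound $\eta(y)=o(e^{-ry})$; it says nothing about the measure $d\eta$. Your splitting argument, which factors out $|\eta(y)|\le\varepsilon e^{-ry}$ and bounds the rest by $\hat{\tilde z}(-r)$, is an argument for the Lebesgue convolution $\int_0^x\tilde z(x-y)\eta(y)\,dy$, not for the Stieltjes integral you actually need.

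The fix is the integration by parts the paper performs:
\[
\int_{[0,x]}\tilde z(x-y)\,dv(y)=\tilde z(0)\,v(x)-\tilde z(x)\,v(0-)+\int_0^x\tilde z'(x-y)\,v(y)\,dy,
\]
which transfers the differentiation onto $\tilde z$. The boundary terms are harmless (the first is $(\alpha m/c)\,v(x)$, the second is $o(e^{-rx})$ since $\tilde z\in\mathcal D$), and now the error term is the Lebesgue integral $\int_0^x\tilde z'(x-y)\eta(y)\,dy$. Your splitting argument then applies verbatim with $\tilde z'$ in place of $\tilde z$, using that $\tilde z'(x)=\kappa\,\tilde z(x)-(\alpha/c)e^{\kappa x}\bar G(x)$ also lies in $\mathcal D$. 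With this one adjustment your proof is correct and, thanks to the $\hat{\tilde z}$ identity, somewhat tidier than the paper's explicit residue bookkeeping.
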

The term $j=0$ of the asymptotic expansion \eqref{exp_ruin_new} is exactly the Cramer-Lundberg approximation \eqref{eq:CL}. 
\begin{example}\label{ex_stop_loss}
 {\rm Let us consider the case where the claims are of the form  $X=\min(V,d)$ where $d>0$ and $V$ has an exponential distribution with parameter $\lambda>0$. This models a reinsurance scenario where a reinsurance company covers  the excess of claim above $d$ only, i.e. according to a stop loss contract with priority $d>0$. In that case, Lundberg Equation (\ref{Lundberg}) reads
\begin{equation}
e^{(z-\lambda)d}=1+ \frac{c}{\alpha}(z-\lambda),\quad z\in\nbC, \label{eq_stop_loss}
\end{equation}
and the Lundberg exponent is $\kappa=\lambda$. 
Solutions $z_j$ satisfy  
$$
z_j=-\frac{\alpha}{c}-\frac{1}{d}W_j\left(-\frac{\alpha d}{c}e^{-\frac{\alpha d}{c}}\right),\quad j\in \nbZ,
$$
where $W_j(.)$ is the $j$th generalized Lambert function. It is easy to check that $$g'(z_j)=d+\frac{d\alpha/c-1}{z_j+\kappa-\lambda}\neq 0,$$ so that Theorem \ref{prop_exp_ruin} entails the asymptotic expansion 
(\ref{exp_ruin_new_continuous}) with $j$th term given by (\ref{g_prime_not_zero_continuous_risk}). Since claims here are bounded, the expression of the ruin probability can in fact be made more precise. Indeed $\psi(x)=1-E[Y_1]W(x)$ where $W(x)$ is the so-called scale function associated to L\'evy process $\{Y_t,\ t\ge 0 \}$ (see Expression (8.7) p.215 of \cite{Kyprianou}). The expression of $W(x)$ is available in Theorem 3 of \cite{KP} as an infinite series, it yields that (\ref{exp_ruin_new_continuous}) can in fact be written as an infinite series, i.e.
$$
\psi(x)=  -\sum_{j=0}^\infty \frac{(\alpha m-c)}{cg'(z_j)(z_j+\kappa)}e^{-x(z_j+\kappa)}.
$$
}\end{example}
\begin{remark}\label{rem_risk_meromorph}{\rm It is worth comparing Theorem~\ref{prop_exp_ruin} with the results of Kuznetsov and Morales  \cite{KM}.
They consider a so called meromorphic risk process  $\{R_t,\ t\ge 0\}$, which amounts to assume that the claims $Z_k$'s have density
$$
\frac{\PP[Z_k \in dx]}{dx}=\sum_{m=1}^\infty b_m e^{-\rho_m x},\quad x\ge 0,
$$
for some positive coefficients $(b_m)_{m\ge 1}$ and increasing sequence $(\rho_m)_{m\ge 1}$ satisfying  $\rho_m\to +\infty$. Corollary 1  of \cite{KM} states that the Laplace exponent $\Lambda(z):=\log \EE\left( e^{z Y_1} \right)$ of the L\'evy process $\{R_t,\ t\ge 0\}$ admits a meromorphic extension on $z\in\nbC$ and that all the solutions of the (extended) Lundberg equation \eqref{Lundberg2} are real, negative and simple.
Furthermore, denoting these solutions by $(-\zeta_n)_{n\ge 1}$, the ruin probability \eqref{def_ruin_proba} has  expansion
\begin{equation}
\psi(x)= \sum_{n=1}^\infty \frac{-\EE[Y_1]}{ \Lambda'(-\zeta_n)} e^{-\zeta_n x}
\label{expansion_KM}
\end{equation}
In this framework, condition \eqref{condB} is not satisfied but we check below that these results are still consistent with Theorem~\ref{prop_exp_ruin}. The Laplace exponent satisfies 
\begin{equation*}
\Lambda(z)=cz+\alpha\left[ \EE \left( e^{-z Z_1}\right)-1\right]
\quad \mbox{and}\quad \Lambda'(z)=c-\alpha\EE \left( Z_1e^{-z Z_1}\right).
\end{equation*}
Elementary computations reveal that
\begin{multline}\label{compute_rem_KM}
g(z)= \frac{\alpha}{c} \frac{1}{z+\kappa}\EE \left( e^{(z+\kappa) Z}-1\right)=\dfrac{\Lambda(-z-\kappa)}{c(z+\kappa)}+1\\
\mbox{and}\quad g'(z)=\frac{\alpha}{c}\frac{\EE \left( Ze^{(z+\kappa)Z}\right)}{z+\kappa}-\frac{\alpha}{c} \frac{\EE \left( e^{(z+\kappa) Z}-1\right)}{(z+\kappa)^2}.
\end{multline}
One can check that the solutions of $g(z)=1$ satisfy $z_n+\kappa=-\zeta_n$ and that 
\[
 g'(z_j)=\frac{1}{z_j+\kappa}\left[\frac{\alpha}{c} \EE \left( Ze^{(z_j+\kappa)Z}\right)-1\right],
\]
so that Expansion  (\ref{exp_ruin_new_continuous}), with corresponding terms given by (\ref{g_prime_not_zero_continuous_risk}), corresponds to the $N+1$ first terms of (\ref{expansion_KM}).}
\end{remark}

\begin{remark}{\rm
With some more effort, Theorem~\ref{prop_exp_ruin} can be extended to more general Gerber-Shiu functions, e.g. of the form
$$
\psi(x,\theta,b,a):=\EE_x\left( e^{-\theta \tau}\nbu_{\{R_{\tau^-}\ge b;\, \underline{R}_{\tau}\ge a; \,\tau<+\infty \} }\right)
$$
where $\theta$, $b$, $a$ are non negative, $\tau:=\inf\{t\ge 0|\ R_t<0 \}$ is the ruin time of the risk process   and $\underline{R}_{t} =\inf_{0\le s\le t} R_s$ is the running minimum at time $t$, 
see Theorem 2.8 of \cite{RVV} as well as Theorem 1 of \cite{KM} for example of such expansions. For ease of presentation, we  stick in this paper to $\psi(x)$ as defined by (\ref{def_ruin_proba}).} 
\end{remark}

\begin{example}\label{example_meromorphic}
 {\rm We give an example of an infinite expansion in the non lattice case of $v(x)$ as in Corollary \ref{cor_expansion_exact}. Conditions (\ref{exact_expansion_abs_convergence}) and (\ref {eq:cond_nonlattice}) may look hard to verify in practice.
 To exhibit such an $X$, we again use the theory of meromorphic L\'evy processes. As in Remark \ref{rem_risk_meromorph}, we pick spectrally negative process $\{Y_t,\ t\ge\}$, $Y_t=ct-\sum_{k=1}^{N_t} Z_k$ where $\{N_t,\ t\ge\}$ is a Poisson process with intensity $\alpha>0$, such that Laplace exponent is of the form
$$
\Lambda(z)=\tilde{\mu}z+ z^2\sum_{m=1}^\infty \frac{b_m}{\rho_m^2(\rho_m+z)},
$$
for some $\tilde{\mu}>0$, where sequences of positive real numbers $ (b_m)_{m\in\nbN^*}$ and (strictly) increasing $(\rho_m)_{m\in\nbN^*}$ are such that series $\sum_{m=1}^\infty \frac{b_m}{\rho_m}$ converges so that L\'evy process $\{Y_t,\ t\ge\}$ is indeed a compound Poisson process, see (3.2) in \cite{KM}. We will additionally suppose that sequence $(\rho_m)_{m\in\nbN^*}$ grows like a polynomial (in addition to being increasing), i.e. there exists $a\ge 1$ such that
\begin{equation}
\rho_m\sim C m^a,\quad m\to\infty 
\label{cond_convergence_rho_m}
\end{equation}
for some $C>0$. Remembering that $G$ is the cdf of the $Z_k$'s, we then consider r.v. $X$ with descending ladder height distribution  of L\'evy process $\{Y_t,\ t\ge 0\}$, with corresponding moment generating function
$$g(z):=\int_0^\infty e^{(\kappa+z)y}\frac{\alpha}{c}\bar G(y)\mathrm{d}y,$$
where $\kappa>0$ is solution to Lundberg equation (\ref{Lundberg}), see Relation (5.7) p. 87 of \cite{AA}. We proceed to show that an infinite expansion for the corresponding function $v(x)$ is available. The sole condition for this expansion is (\ref{cond_convergence_rho_m}), which is not too stringent and covers a wide range of processes. The relation between $g(z)$ and $\Lambda(z)$ is given by (\ref{compute_rem_KM}). This has two important consequences. The first one is that $z\mapsto g(z)$ is meromorphic, as $\Lambda(z)$ is. The second one is that $z$ is a solution  to $g(z)=1$ iff $\Lambda(-z-\kappa)=0$. By Properties (v) and (vi) of \cite{KM} (see also Theorem 1 (7) of \cite{KKP}), one deduces that roots $(z_n)_{n\in\nbN}$ are {\it real non negative} and verify
\begin{equation}\label{roots_ordered}
z_0=0<\rho_1-\kappa<z_1<\rho_2-\kappa< z_2<...
\end{equation}
We now turn back to Conditions (\ref{exact_expansion_abs_convergence}) and (\ref {eq:cond_nonlattice}). We start by (\ref{exact_expansion_abs_convergence}). We compute from (\ref{compute_rem_KM})
\begin{equation}
g(z)=-\frac{\tilde{\mu}}{c} + 1+ (z+\kappa)\sum_{m=1}^\infty \frac{b_m}{c\rho_m^2 (\rho_m-z-\kappa)},
\label{expression_g_infinite_expansion}
\end{equation}
hence, for all $j\in\nbN$, $g'(z_j)=\sum_{m=1}^\infty \frac{b_m}{c\rho_m (\rho_m-z_j-\kappa)^2}$, which happens to be positive. We now write
$$
z_j^2g'(z_j)\ge   \sum_{m=1}^j \frac{b_m}{c\rho_m \left(1+\frac{\rho_m-\kappa}{z_j}\right)^2}= \sum_{m=1}^\infty \frac{b_m}{c\rho_m \left(1+\frac{\rho_m-\kappa}{z_j}\right)^2}\nbu_{[m\le j]}.
$$
Using (\ref{roots_ordered}) and the dominated convergence theorem, one easily shows that $\sum_{m=1}^\infty \frac{b_m}{c\rho_m \left(1+\frac{\rho_m-\kappa}{z_j}\right)^2}\nbu_{[m\le j]}\longrightarrow \sum_{m=1}^\infty \frac{b_m}{c\rho_m }$ as $j\to\infty$. One then deduces from the above inequality that
\begin{equation}\label{lim_inf_example_infinite_expansion}
  \liminf_{j\to \infty}z_j^2g'(z_j)\ge  \sum_{m=1}^\infty \frac{b_m}{c\rho_m }>0.
\end{equation}  
Now, (\ref{cond_convergence_rho_m}) and (\ref{roots_ordered}) implies that $\sum_{m=1}^\infty e^{-xz_m}$ is a convergent series for all $x>0$ which, combined with (\ref{lim_inf_example_infinite_expansion}), implies the convergence (\ref{exact_expansion_abs_convergence}).\\
 We now prove (\ref {eq:cond_nonlattice}), by establishing that $\lim_{n\to\infty} \frac{1}{ r_n(1-g(r_n+i\theta))}=0$ with $r_n:=\rho_n-\kappa$. Using (\ref{expression_g_infinite_expansion}) and $g(0)=1$ implies for all $\theta\in\nbR$
\begin{eqnarray*}
1-g(r_n+i\theta)&=& g(0)-g(r_n+i\theta)\nonumber\\
&=& \sum_{m=1}^\infty \frac{b_m}{c\rho_m^2} \left[\frac{\kappa}{\rho_m-\kappa} -\frac{\rho_n+i\theta}{\rho_m-\rho_n-i\theta}\right]  \\
&=& \sum_{m=1}^\infty \frac{b_m}{c\rho_m^2} \left[\frac{\kappa}{\rho_m-\kappa} +1 -\rho_m \frac{\rho_m-\rho_n+i\theta}{(\rho_m-\rho_n)^2+\theta^2}\right]\\
&:=&R_n(\theta)+iI_n(\theta).
\end{eqnarray*}
Let us set $R_{n,1}(\theta):= \sum_{m=1}^\infty \frac{b_m}{c\rho_m^2} \left[\frac{\kappa}{\rho_m-\kappa} +1\right]+ \sum_{m=1}^n \frac{b_m}{c\rho_m^2}\rho_m \frac{\rho_n-\rho_m}{(\rho_m-\rho_n)^2+\theta^2}$ and $ R_{n,2}(\theta):=\sum_{m=n+1}^\infty \frac{b_m}{c\rho_m^2}\rho_m \frac{\rho_n-\rho_m}{(\rho_m-\rho_n)^2+\theta^2}$. As $(\rho_n)_{n\in\nbN}$ is increasing one gets the following inequalities
\begin{eqnarray*}
R_{n,1}(\theta)&\ge & \sum_{m=1}^\infty \frac{b_m}{c\rho_m^2} \left[\frac{\kappa}{\rho_m-\kappa} +1\right]:=\xi>0,\\
0\ge R_{n,2}(\theta) &\ge & \sum_{m=n+1}^\infty \frac{b_m}{c\rho_m^2} \rho_m \frac{\rho_n-\rho_m}{(\rho_m-\rho_n)^2}=\sum_{m=n+1}^\infty \frac{b_m}{c\rho_m}\frac{1}{\rho_n-\rho_m}:=\chi_n,
\end{eqnarray*}
so that the real part of $1-g(r_n+i\theta)$ verifies the inequality
\begin{equation}\label{expression_g_infinite_expansion2}
|R_n(\theta)|\ge  |R_{n,1}(\theta)| - |R_{n,2}(\theta)|= R_{n,1}(\theta) + R_{n,2}(\theta)\ge  \xi + \chi_n.
\end{equation}
(\ref{cond_convergence_rho_m}) entails that $|\chi_n|\le \frac{1}{\rho_{n+1}-\rho_n}\sum_{m=n+1}^\infty \frac{b_m}{c\rho_m}\sim \frac{1}{C an^{a-1}}\sum_{m=n+1}^\infty \frac{b_m}{c\rho_m}\longrightarrow 0$ as $n\to\infty$.
One then deduces that $\xi + \chi_n>0 $ for $n$ large enough, and
$$
\sup_{\theta\in\nbR} \frac{1}{r_n |1-g(r_n+i\theta)|}\le \sup_{\theta\in\nbR} \frac{1}{r_n |R_n(\theta)|}\le \frac{1}{r_n(\xi + \chi_n)}\longrightarrow 0,\quad n\to \infty,
$$
proving (\ref {eq:cond_nonlattice}). Hence infinite expansion (\ref{exact_expansion_v_non_lattice}) holds.

} \end{example}

\subsection{Skip free random walks on $\nbZ$}\label{subsec_risk_discrete}
Quite unlike its continuous time counterpart, risk theory in discrete time seems to have been less studied. 
We refer to \cite{SLG} for an overview of such processes, as well as  \cite[Chapter XVI]{AA}. This type of process is but a skip free random walk, i.e. a random walk with at most unit upward movement, and is in fact studied in many fields of applied probability. We consider here the so-called binomial discrete time risk model defined by
\[
R_n=x+n-\sum_{j=1}^n Z_j=x+Y_n,\quad n\in \nbN
\]
where $x\in\nbN$ is the initial reserve, the premium rate is assumed w.l.o.g. to be equal to $1$, the claims 
$(Z_j)_{j\in \nbN}$ form an i.i.d. sequence taking values in $\nbN$. We let $m_1=\mathbb{E}[Z_1]$ and assume that $m_1\in (0,1)$.

The discrete ruin probability is defined by
\begin{equation}
\psi(x):=\PP\left( \inf_{n\in\nbN\setminus \{0 \}}R_n \le 0\right),\quad x\in\nbN .
\label{def_ruin_proba_discrete}
\end{equation}
This corresponds to the probability that a $\nbZ$-valued random walk starting from $x\in \nbN$ eventually becomes nonpositive.
The condition $m_1\in(0,1)$ ensures that the random walk  has a positive drift so that $\psi(x)<1$.

A closed form expression for $\psi(x)$ may be found in \cite{Gerber}, however this expression requires computing an infinite number of convolutions of distribution of $Z_1$.
We are here interested in finding a simple expansion of $\psi(x)$ as $x\to \infty$. Similarly to Condition \eqref{condB} in the continuous case, we assume here that
\begin{equation}\label{condB'}
\mbox{the moment generating function }\EE\left[e^{sZ}\right]\mbox{ is finite for all } s>0. 
\end{equation}
In this discrete setting, the Lundberg equation writes 
\begin{equation}
\sum_{k=0}^\infty e^{zk}\mathbb{P}[Z>k]=1,\quad z\in\nbC. \label{eq_Lundberg_discrete}
\end{equation}
Restricted to the real numbers, this equation has, thanks to convexity of mean generating function of $Z$, a unique solution $\kappa>0$. We define the probability mass function $f$ defined by
$
 f(k)=e^{\kappa k}\mathbb{P}[Z>k],\quad k\in\mathbb{N},$
with moment generating function
\[
 g(z)=\sum_{k=0}^\infty e^{z k}f(k)=\frac{1-\EE(e^{(z+\kappa)Z})}{1-e^{z+\kappa}},\quad z\in\nbC.
\]
The following theorem provides an  asymptotic expansion of $\psi(x)$ as $x\to \infty$, $x\in\nbN$ and is the discrete analog of Theorem \ref{prop_exp_ruin}.
\begin{theorem}\label{thm:ruin_discret}
Assume conditions \eqref{condB'} holds. Let $r>0$ be fixed and $z_0=0, z_1,\ldots,z_N$ the solutions to Equation $g(z)=1$ in $S_{r}^f=\{z\in\mathbb{C};\  \Re(z)<r, \ -\pi\leq \Im(z)\leq \pi\}$. 
Then, the ruin probability $\psi(x)$ has the asymptotic expansion
\begin{equation}
\psi(x)=  -\sum_{j=0}^N \mathrm{Res} \left[\frac{1}{1-g(z)} \frac{m-g(z)e^{\kappa+z}}{1-e^{\kappa+z}} e^{-(z+\kappa)x };z_j\right] + o(e^{-(r+\kappa)x}) \quad \mbox{as } x\to \infty.\label{exp_ruin_new}
\end{equation}
If $z_j$ is a simple zero of $g-1$, i.e. $g'(z_j)\neq 0$, then the $j$-th term in \eqref{exp_ruin_new} has the simple form
\begin{equation}
\frac{m-e^{\kappa+z_j}}{e^{\kappa+z_j}-\EE\left(Z e^{(\kappa+z_j)Z}\right)}\, e^{-(z_j+\kappa) x}.\label{expression_term_ruin_discrete}
\end{equation}
\end{theorem}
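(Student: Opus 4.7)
The plan is to mirror the continuous-case proof of Theorem~\ref{prop_exp_ruin} in the lattice setting, with Theorem~\ref{thm_expansion_v_lattice} (or rather Corollary~\ref{cor_expansion_exact_2}) replacing Theorem~\ref{thm_expansion_v}. The first step is to derive a defective renewal equation for $\psi$: starting from the first-step analysis
\[
\psi(x)=\PP[Z\ge x+1]+\sum_{j=0}^{x}\PP[Z=j]\,\psi(x+1-j),\qquad x\ge 0,
\]
and applying Abel summation via $\PP[Z=j]=\PP[Z>j-1]-\PP[Z>j]$, one rearranges to isolate the forward differences $\psi(l)-\psi(l+1)$ and obtains, for $x\ge 1$,
\[
\PP[Z=0]\,(\psi(x)-\psi(x+1)) = \PP[Z>x]\,(1-\psi(1)) + \sum_{l=1}^{x-1}\PP[Z>x-l]\,(\psi(l)-\psi(l+1)).
\]

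Next, multiplying by $e^{\kappa x}$ turns the kernel $\PP[Z>\cdot]$ into the proper pmf $f(k)=e^{\kappa k}\PP[Z>k]$ by Lundberg's equation \eqref{eq_Lundberg_discrete}. Setting $\bar\epsilon(x):=e^{\kappa x}(\psi(x)-\psi(x+1))$, the above becomes a renewal equation with kernel $f$, whose solution, using the standard identity $(f\ast u)(x)=u(x)$ for $x\ge 1$ (where $u=\sum_{n\ge 0}f^{\ast n}$ is the renewal mass function) together with the value $\psi(1)=1+(m-1)/\PP[Z=0]$ (obtained by summing the above identity over $x\ge 1$), simplifies to
\[
\bar\epsilon(x)=(1-m)\,u(x),\qquad x\ge 1.
\]

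Then I would invoke Corollary~\ref{cor_expansion_exact_2} (or equivalently, difference the expansion \eqref{expansion_v_lattice}) to obtain $u(k)=\sum_{j=0}^{N}e^{-kz_j}/g'(z_j)+o(e^{-rk})$ in the simple-zero case. Telescoping $\psi(x)=\sum_{y\ge x}(\psi(y)-\psi(y+1))=(1-m)\sum_{y\ge x}e^{-\kappa y}u(y)$ and computing the geometric sums $\sum_{y\ge x}e^{-y(z_j+\kappa)}=e^{-x(z_j+\kappa)}/(1-e^{-(z_j+\kappa)})$ then yields the asserted expansion \eqref{exp_ruin_new}. The simple-pole closed form \eqref{expression_term_ruin_discrete} follows from the identity $g'(z_j)(e^{z_j+\kappa}-1)=\EE[Ze^{(z_j+\kappa)Z}]-e^{z_j+\kappa}$, obtained by differentiating $g(z)(e^{z+\kappa}-1)=\EE[e^{(z+\kappa)Z}]-1$ at $z_j$.

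The hard part will be the first step: the discrete first-step analysis naturally couples $\psi(x)$ to $\psi(x+1)$ rather than to $\psi(y)$ with $y<x$, so the Abel summation and the bookkeeping around the convention $\psi(y)=1$ for $y\le 0$ must be executed carefully to produce a bona fide backward-looking convolution. Once the renewal equation is in standard form, the rest of the argument is parallel to the continuous case: assumption~\eqref{condB'} guarantees that $g$ is entire, so Theorem~\ref{thm_expansion_v_lattice} applies for arbitrary $R_0>0$, and the $o(e^{-rk})$ remainder in the expansion of $u$ survives the geometric summation to yield the $o(e^{-(r+\kappa)x})$ remainder in the expansion of $\psi$.
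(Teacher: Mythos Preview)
Your route is genuinely different from the paper's. The paper starts from a Pollaczek--Khinchine-type defective renewal equation for $\psi$ itself, $\psi(x)=\bar L(x)+\sum_{k=0}^{x}\psi(x-k)l(k)$, tilts it by $e^{\kappa x}$, and solves via the renewal function $U$, following the continuous-time proof almost line by line; the bulk of their argument is the residue computation for the two pieces $I_1(x)$ and $I_2(x)$ coming from an Abel-type summation of $\psi_\kappa=z\ast u$. You instead derive, via first-step analysis and Abel summation, a renewal equation for the \emph{increments} $\psi(x)-\psi(x+1)$, arriving at the neat closed identity $e^{\kappa x}\bigl(\psi(x)-\psi(x+1)\bigr)=(1-m_1)\,u(x)$ for $x\ge 1$, and then recover $\psi$ by a single geometric telescoping. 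This is shorter and avoids all the residue algebra of the paper's proof; the only delicate point is the bookkeeping around $x=0,1$ in the Abel summation, which you handle correctly.

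However, your derivation does \emph{not} land on the displayed formulas \eqref{exp_ruin_new}--\eqref{expression_term_ruin_discrete}. Carrying your computation through, the telescoping gives
\[
\psi(x)= -(1-m_1)\sum_{j=0}^N \mathrm{Res}\!\left(\frac{e^{-(z+\kappa)x}}{(1-g(z))\bigl(1-e^{-(z+\kappa)}\bigr)};z_j\right)+o\bigl(e^{-(r+\kappa)x}\bigr),
\]
with simple-pole coefficient $\dfrac{(1-m_1)e^{\kappa+z_j}}{\EE[Ze^{(\kappa+z_j)Z}]-e^{\kappa+z_j}}$, which is not \eqref{expression_term_ruin_discrete}. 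These are honestly different: at $z_0=0$ they give Cram\'er--Lundberg constants $\dfrac{(1-m_1)e^{\kappa}}{\EE[Ze^{\kappa Z}]-e^{\kappa}}$ versus $\dfrac{m_1-e^{\kappa}}{e^{\kappa}-\EE[Ze^{\kappa Z}]}$. A direct check on $\PP[Z=0]=\tfrac12$, $\PP[Z=1]=\PP[Z=2]=\tfrac14$ (where one computes $\kappa=\ln 2$ and $\psi(x)=2^{-x}$ exactly for $x\ge1$) shows that your constant equals $1$ and the paper's equals $\tfrac52$, so your expression is the correct one. The discrepancy originates in the paper's equation \eqref{renewal_eq_discrete}: the convolution there should run only to $k=x-1$ (equivalently, the ladder-height renewal equation holds for $x\mapsto\psi(x+1)$, not for $\psi$). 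So your method is sound, but do not expect the final residue to reduce to \eqref{exp_ruin_new} as written; you will instead obtain the corrected version above.
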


\subsection{A two dimensional ruin problem}
We consider a two dimensional ruin problem motivated by applications in reinsurance. The capitals of two insurance companies are modeled by the risk processes 
\begin{equation}
R_t^j=x_j + c_j t - \sum_{k=1}^{N^j_t} Z^j_k=x_j+Y^j_t,\quad t\ge 0,\quad j=1,2,
\label{model_2dim}
\end{equation}
where, $x_j\ge 0$, $c_j>0$ are the respective initial reserves and premium rates, $\{N^j_t,\ t\ge 0 \}$ are Poisson processes with intensities $\alpha_j>0$, and $(Z^j_k)_{k\in \nbN}$ are the corresponding claims with mean $m^j$. For each $j=1,2$, independence between $\{N^j_t,\ t\ge 0 \}$ and  $(Z^j_k)_{k\in \nbN}$ is assumed. However no  independence is required between processes $\{R^1_t,\ t\ge 0 \}$ and $\{R^2_t,\ t\ge 0 \}$. We suppose that the mean drifts $\EE(Y_j)=c_j-\alpha_j m^j$, $j=1,2$, are positive, and then define the eventual ruin probabilities for each company
$$
\psi_j(x_j):=\PP\left( \inf_{t\ge 0}R^j_t <0\right),\quad j=1,2.
$$
We also consider the probability that (at least) one of the companies is eventually ruined
$$
\psi_{\small \mbox{or}}(x_1,x_2):= \PP\left( \inf_{t\ge 0}R^1_t <0\mbox{ or } \inf_{t\ge 0}R^2_t <0 \right),
$$
i.e. the probability that the two dimensional process $ \{(R^1_t,R^2_t),\ t\ge 0 \}$ exits the first quadrants $[0,+\infty)^2$. 

We are interested here in the asymptotics of $\psi_{\small \mbox{or}}(x_1,x_2)$ as $(x_1,x_2)$ tend to infinity along a fixed direction $x_2/x_1=q\in (0,+\infty)$. 
We refer to \cite{APP, Rabe} for related results that concern light tailed claims, or \cite{Biard, HJ} for models featuring heavy tailed claims.

For $j=1,2$, letting $g_j(z):=\EE(e^{z Z^j_1})$, we suppose that Equation $g_j(z)=1$ has solutions $z_0^j=0$, $z_1^j$, $\overline{z_1^j}$ in $S_r=\{z\in\mathbb{C},0\le \Re(z)<r\}$ for some $r>0$, and that those $z_1^j$, $\overline{z_1^j}$ are simple zeros of $g-1$. Thus $\psi_j(x_j)$  has  the following $2$ terms expansion from Theorem \ref{prop_exp_ruin} 
\begin{equation}
\psi_j(x_j)=C_0^j e^{-\kappa_j x_j} + \Re\left[C_1^j e^{-(\kappa_j+z^1_j) x_j}\right] + \varepsilon_j(x_j)e^{-(r+\kappa_j)x_j},\quad x_j\to +\infty,\quad j=1,2,
\label{two_dim_expansions}
\end{equation}
where $\varepsilon_j(x_j)\longrightarrow 0$ as $x_j\to+\infty$, and
$$
C_k^j:=\frac{c_j-\alpha_j m^j}{\alpha_j \EE\left( Z^j e^{(z_j+\kappa)Z^j}\right)-c_j},\quad k=0,1,\quad j=1,2.
$$
The main result of this subsection is the following theorem.
\begin{prop}\label{Prop_2dim}
A two term asymptotic for $\psi_{\small \mbox{or}}(x_1,x_2)$ as $(x_1,x_2)$ tend to infinity along the fixed direction $x_2/x_1=q\in (0,+\infty)$, is given by
\begin{equation}
\psi_{\small \mbox{or}}(x,qx)=\Re\left[D_0 e^{-d_0(q)x}\right]+\Re\left[D_1 e^{-d_1(q)x}\right] + \eta_q(x)e^{-\Re(d_1(q))x},\quad x\to \infty,
\label{two_terms_two dim}
\end{equation}
where $d_0(q)>0$, $\Re(d_1(q))>d_0(q)$, $0\le \limsup_{x\to \infty}|\eta_q(x)|\le 1$ and where   $x\mapsto \Re\left[D_0 e^{-d_0(q)x}\right]$ and $x\mapsto \Re\left[D_1 e^{-d_1(q)x}\right]$ are the two first dominant functions among  $x\mapsto C_0^1 e^{-\kappa_1 x}$, $x\mapsto C_0^2 e^{-q\kappa_2 x}$, $x\mapsto \Re\left[C_1^1 e^{-(\kappa_1+z^1_1) x}\right]$ and $x\mapsto \Re\left[C_1^2 e^{-q(\kappa_2+z^2_1) x}\right]$.
\end{prop}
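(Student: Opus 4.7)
The plan is to reduce the two-dimensional problem to the one-dimensional expansion \eqref{two_dim_expansions} via the inclusion-exclusion identity
\[
\psi_{\small \mbox{or}}(x_1,x_2) = \psi_1(x_1) + \psi_2(x_2) - \psi_{\small \mbox{and}}(x_1,x_2),
\]
where $\psi_{\small \mbox{and}}(x_1,x_2):=\PP\left(\inf_{t\ge 0}R_t^1<0 \mbox{ and } \inf_{t\ge 0}R_t^2<0\right)$ is the probability that both companies are eventually ruined. The dominant behaviour will be extracted from the sum $\psi_1(x)+\psi_2(qx)$ via \eqref{two_dim_expansions}, while the joint ruin probability will be absorbed into the error by means of the trivial bound $\psi_{\small \mbox{and}}(x_1,x_2)\le\min(\psi_1(x_1),\psi_2(x_2))$.

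Substituting \eqref{two_dim_expansions} at $x_1=x$, $x_2=qx$ expresses $\psi_1(x)+\psi_2(qx)$ as the sum of the four exponentially decaying functions
\[
C_0^1 e^{-\kappa_1 x},\quad C_0^2 e^{-q\kappa_2 x},\quad \Re\bigl[C_1^1 e^{-(\kappa_1+z^1_1)x}\bigr],\quad \Re\bigl[C_1^2 e^{-q(\kappa_2+z^2_1)x}\bigr],
\]
plus an error $o(e^{-sx})$ with $s:=\min(r+\kappa_1,q(r+\kappa_2))$. The real parts of the four exponents are $\kappa_1$, $q\kappa_2$, $\kappa_1+\Re(z^1_1)$, $q(\kappa_2+\Re(z^2_1))$, all strictly below $s$ since $r>\Re(z^j_1)$ by assumption. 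I would then relabel as $\Re[D_0 e^{-d_0(q)x}]$ and $\Re[D_1 e^{-d_1(q)x}]$ the two summands whose exponents have the smallest and second smallest real parts, noting in particular that $d_0(q)=\min(\kappa_1,q\kappa_2)$ is real. The two remaining summands decay strictly faster than $e^{-\Re(d_1(q))x}$, and the one-dimensional remainder $o(e^{-sx})$ satisfies $s>\min(\kappa_1+\Re(z^1_1),q(\kappa_2+\Re(z^2_1)))\ge\Re(d_1(q))$, so both are absorbed into $o(e^{-\Re(d_1(q))x})$.

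Combining the four-term expansion with the bound $\psi_{\small \mbox{and}}(x,qx)\le\min(\psi_1(x),\psi_2(qx))=O(e^{-\max(\kappa_1,q\kappa_2)x})$ and observing that $d_1(q)$, being the second smallest of the four exponents, forces $\Re(d_1(q))\le\max(\kappa_1,q\kappa_2)$, yields the expansion \eqref{two_terms_two dim} with $\eta_q$ bounded, via the identity $\eta_q(x)=-\psi_{\small \mbox{and}}(x,qx)e^{\Re(d_1(q))x}+o(1)$. The main obstacle is the sharp normalization $\limsup_{x\to\infty}|\eta_q(x)|\le 1$ rather than mere finiteness. This is where the Lundberg inequality $\psi_j(x)\le e^{-\kappa_j x}$ enters: it strengthens the intersection bound to $\psi_{\small \mbox{and}}(x,qx)\le e^{-\max(\kappa_1,q\kappa_2)x}$ uniformly in $x\ge 0$, so that $\psi_{\small \mbox{and}}(x,qx)e^{\Re(d_1(q))x}\le e^{(\Re(d_1(q))-\max(\kappa_1,q\kappa_2))x}\le 1$, and $\limsup_{x\to\infty}|\eta_q(x)|\le 1$ follows.
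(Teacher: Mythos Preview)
Your proof is correct and rests on the same elementary observation as the paper's: the sandwich $\psi_1(x_1)\le \psi_{\small \mbox{or}}(x_1,x_2)\le \psi_1(x_1)+\psi_2(x_2)$, which is exactly your inclusion--exclusion identity combined with $0\le\psi_{\small\mbox{and}}\le\psi_2$. The paper phrases it via the two-sided inequality (citing \cite{APP}) and then argues by explicit case analysis on which two of the four exponential terms dominate, whereas you handle all cases at once by isolating $\psi_{\small\mbox{and}}(x,qx)$ as the sole non-vanishing contribution to $\eta_q$.

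One point where your write-up is actually sharper: to obtain the normalization $\limsup|\eta_q|\le 1$ (rather than $\le C_0^j$), the paper's case-2 computation divides by $C_0^2 e^{-q\kappa_2 x}$ and obtains a ratio in $[-1,0]$, which yields $|\eta_q|\le C_0^2$; the passage to $\le 1$ tacitly uses $C_0^2\le 1$. You make this step explicit by invoking the Lundberg inequality $\psi_j(x)\le e^{-\kappa_j x}$ directly on $\psi_{\small\mbox{and}}$, which is cleaner.
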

Four different cases occur in the  asymptotic  described in Proposition \ref{Prop_2dim}, depending on the asymptotic direction $\vec{u}=(1,q)$ : 
\begin{multline*}
\Re(\kappa_1+z_1^1)>q\kappa_2,\ \kappa_1>q\kappa_2>\Re(\kappa_1+z_1^1),\\
q\kappa_2>\kappa_1 > q\Re(\kappa_2+z_1^2),
\mbox{ or}\ q\Re(\kappa_2+z_1^2)>\kappa_1 .
\end{multline*}
To each case corresponds a different two terms expansion for $\psi_{\small \mbox{or}}(x,qx)$ as summarized in Figure \ref{fig_2dim1}.
Proposition \ref{Prop_2dim} generalizes the one term expansion given in Theorem 3 of \cite{APP}. The last term in (\ref{two_terms_two dim}) is only  $O(e^{-\Re(d_1(q))x})$ but the condition $\limsup_{x\to \infty}|\eta_q(x)|\leq 1$ provides information on how fast this term tends to $0$.

\begin{figure}[!h]
\begin{center}
   \includegraphics[width=0.65\textwidth]{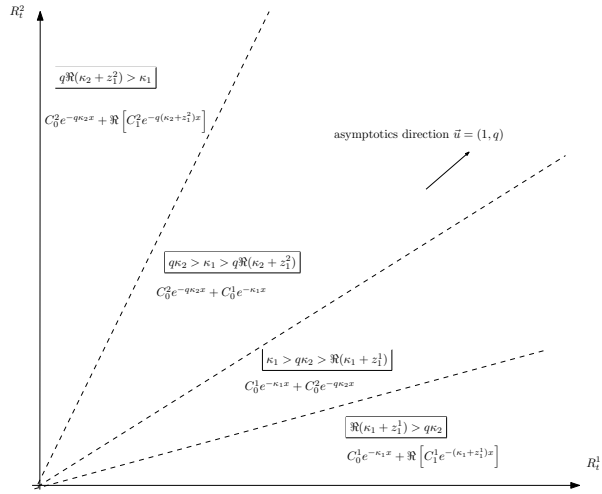} 
\end{center}
\caption{Two term asymptotic expansions on the four different regions.}
\label{fig_2dim1}
\end{figure}

\begin{example}{\rm Let us consider the Stop Loss contract scenario with priority $d>0$. We assume that $\{ R_t^1,\ t\ge 0\}$ is the capital of an insurance company with claims $(Z_n^1)_{n\in \nbN}$ distributed as $\min(V,d)$ where  $V$ is exponentially distributed with parameter $\lambda$. The second risk process $\{ R_t^2,\ t\ge 0\}$ corresponds to the capital of a reinsurance company which covers the excess of claims with priority $d$, i.e. claims  $(Z_n^1)_{n\in \nbN}$ are distributed as $(V-d)^+$, as described in Example \ref{ex_stop_loss}. 
In that case, the two risk processes  $R^1_t$ and $R^2_t$ are dependent. Because of the memoryless property of the exponential distribution, the $Z_n^2$'s are exponentially distributed with parameter $\lambda$ and the intensity of the Poisson process $\{ N_t^2,\ t\ge 0\}$ is given by
$$
\alpha_2=\alpha_1 \PP(V>d)=\alpha_1 e^{-\lambda d}.
$$
As the $Z_n^2$'s are exponentially distributed, 
\begin{equation}
\psi_2(x_2)= C_0^2 e^{-\kappa_2 x_2},\quad \kappa_2=\lambda- \alpha_2/c_2,\quad C_0^2=\frac{\alpha_2}{c_2\lambda},
\label{expansion_expo_claims}
\end{equation}
see Corollary 3.2 p.78 of \cite{AA}. The two terms expansion for $\psi_1(x_1)$ is given by
$$
\psi_1(x_1)= C_0^1 e^{-\kappa_1 x_1} +  \Re\left[C_1^1 e^{-(\kappa_1+z^1_1) x}\right] + o(e^{-(r+\kappa_1)x_1})
$$
where $z_1^1$ is the solution to Equation (\ref{eq_stop_loss}) with smallest real part $\Re(z_1^1)>\kappa_1$ and $r>\Re(z_1^1)$.
Note that even though $\{ R_t^1,\ t\ge 0\}$ and $\{ R_t^2,\ t\ge 0\}$ are correlated, Proposition \ref{Prop_2dim} may be applied and, since $C_1^2=0$, only three cases occur :
\[
\Re(\kappa_1+z_1^1)>q\kappa_2,\quad \kappa_1>q\kappa_2>\Re(\kappa_1+z_1^1)\quad \mbox{and}\quad q\kappa_2>\kappa_1.
\]
This is summarized in Figure \ref{fig_2dim2}.
\begin{figure}[!h]
\begin{center}
   \includegraphics[width=0.65\textwidth]{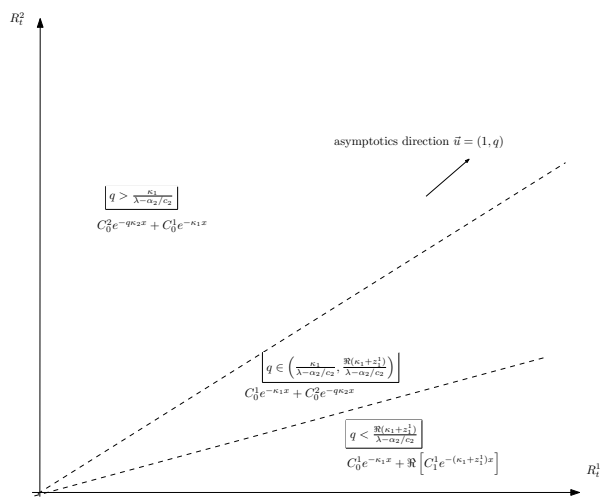}
\end{center}
\caption{Two term asymptotic expansions for the stop loss model. }
\label{fig_2dim2}
\end{figure}
}
\end{example}

\section{Proofs for section \ref{sec:renewal}}
\subsection{Proof of Theorem \ref{thm_expansion_v_lattice}}
For the proof of  Theorem \ref{thm_expansion_v_lattice}, we need the following lemma.
\begin{lemm}\label{lemma_weak_conv}
Let $f:[-\pi,\pi]\longrightarrow \nbC$ be a continuous function satisfying $f(0)\in\nbR$ and $\Im(f(\theta))=O(\theta)$ as $\theta\to 0$. Then we have the following convergence as $ r\to 1-$:
\begin{equation}
\int_{-\pi}^\pi   \Re\left(f(\theta)\frac{1}{1-r g(i\theta)}\right)d\theta\longrightarrow \int_{-\pi}^\pi   \Re\left(f(\theta)\frac{1}{1- g(i\theta)}\right)d\theta + \pi\frac{f(0)}{\mu}.
\label{weak_conv}
\end{equation}
\end{lemm}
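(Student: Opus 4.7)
The strategy is to split the integral at $|\theta|=\delta$ for small $\delta>0$: apply dominated convergence on the bulk $\{\delta\leq|\theta|\leq\pi\}$ (using that the lattice condition gives $\inf_{\delta\leq|\theta|\leq\pi}|1-g(i\theta)|>0$, hence $|1-rg(i\theta)|$ is bounded away from $0$ uniformly for $r\in[1/2,1]$), and on $[-\delta,\delta]$ extract the Poisson-kernel atom of mass $\pi/\mu$ that develops at $\theta=0$ as $r\to 1^-$, which multiplied by $f(0)$ produces the extra term on the right-hand side of \eqref{weak_conv}.

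On $[-\delta,\delta]$ I would expand $g(i\theta)=1+i\mu\theta-\mu_2\theta^2/2+o(\theta^2)$ and write $1-rg(i\theta)=A+iB$ with $A=(1-r)+rA_0$, $B=rB_0$, where $A_0=1-\Re g(i\theta)\geq 0$ and $B_0=-\Im g(i\theta)$. The identity
\[\Re\!\Bigl(\frac{1}{1-rg(i\theta)}\Bigr) = \frac{1-r}{A^2+B^2} + r\cdot\frac{A_0}{A^2+B^2}\]
splits the integrand into a Poisson-type piece and a regular piece. For the constant contribution $f(0)\cdot(1-r)/(A^2+B^2)$, the change of variables $\theta=(1-r)u/\mu$ together with the lower bound $A^2+B^2\geq(1-r)^2+r^2\mu^2\theta^2+O(\theta^3)$ converts the integral into the Cauchy form $\frac{f(0)}{\mu}\int_{-\mu\delta/(1-r)}^{\mu\delta/(1-r)}(1+u^2)^{-1}du\cdot(1+o(1))\longrightarrow \pi f(0)/\mu$. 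The regular piece $f(0)\cdot r A_0/(A^2+B^2)$ is bounded uniformly in $r$ on $[-\delta,\delta]$ (using $A^2+B^2\geq r^2(A_0^2+B_0^2)$, valid because $A\geq rA_0\geq 0$) and converges by dominated convergence to $f(0)\int_{-\delta}^\delta\Re(1/(1-g(i\theta)))\,d\theta$. The remaining contributions involving $f(\theta)-f(0)$ are controlled by writing $f-f(0)=(f_1-f(0))+if_2$ with $f_2(\theta)=O(\theta)$: the Poisson-type remainder is bounded by $(\pi/\mu)\cdot\sup_{|\theta|\leq\delta}|f_1-f(0)|$, which is $o(1)$ as $\delta\to 0$; the regular remainder converges by dominated convergence to $\int_{-\delta}^\delta\Re((f-f(0))/(1-g(i\theta)))\,d\theta$. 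Here the hypothesis $\Im f(\theta)=O(\theta)$ is crucial both to bound $|f_2 B|/(A^2+B^2)=O(1)$ uniformly near $\theta=0$ and to ensure that $\Re(f/(1-g(i\theta)))$ is integrable on $[-\pi,\pi]$ in the first place.

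Summing the bulk limit and the four contributions on $[-\delta,\delta]$, then letting $\delta\to 0$ to absorb the $O(\delta)$ remainders, yields the claim. The main obstacle is the precise accounting in the Poisson-kernel step: the Cauchy atom $\pi f(0)/\mu$ must be separated cleanly from the regular integral $f(0)\int\Re(1/(1-g(i\theta)))\,d\theta$, which requires tracking the error terms in the Taylor expansion of $g$ uniformly for $\theta\in[-\delta,\delta]$ as $r\to 1^-$ and carefully exploiting the sign information $A_0\geq 0$ to get the sharp lower bound on $A^2+B^2$.
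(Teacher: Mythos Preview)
Your argument is correct and self-contained. The paper takes a much shorter route: it observes that once one checks that $\theta\mapsto\Re\bigl(f(\theta)/(1-g(i\theta))\bigr)$ is integrable at $\theta=0$, the convergence \eqref{weak_conv} follows by repeating the proof of Lemma~10.11 in Breiman \cite{Breiman} (stated there for real-valued $f$) essentially verbatim. Accordingly the paper only carries out the integrability check, writing
\[
\Re\Bigl(\frac{f(\theta)}{1-g(i\theta)}\Bigr)=\frac{\theta^2}{|1-g(i\theta)|^2}\cdot\frac{\Re\bigl(f(\theta)(1-\overline{g(i\theta)})\bigr)}{\theta^2},
\]
noting that the first factor tends to $1/\mu^2$, and bounding the second via $\Re f\cdot(1-\EE\cos(\theta X_1))/\theta^2$ (integrable since $1-\cos x\leq x^2/2$) and $\Im f\cdot\EE\sin(\theta X_1)/\theta^2$ (integrable since $|\Im f(\theta)|\leq C|\theta|$ and $|\sin(\theta X_1)|\leq|\theta|X_1$).

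What you gain by doing the Poisson-kernel extraction explicitly is independence from the cited reference and a transparent explanation of where the atom $\pi f(0)/\mu$ comes from; what the paper gains is brevity, since Breiman's lemma already packages the bulk/core splitting, the Cauchy-kernel change of variables, and the tail estimate you flag as ``the main obstacle''. Your decomposition $\Re\bigl(1/(1-rg)\bigr)=(1-r)/(A^2+B^2)+rA_0/(A^2+B^2)$ together with the monotonicity $A\geq rA_0\geq 0$ is precisely the mechanism hidden inside Breiman's proof, so the two arguments coincide at the analytic level---yours simply unrolls the black box. The role you assign to the hypothesis $\Im f(\theta)=O(\theta)$ (uniform boundedness of $f_2 B/(A^2+B^2)$ and integrability of the limit) matches exactly the use the paper makes of it in the integrability check.
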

The above lemma is akin to the preliminary result  of Stone \cite[p.330]{Stone2}, see also Breiman \cite{Breiman} and Feller and Orey \cite{FO}.
In these references, only the case of a real valued function $f$ is considered. In the complex case, we give a short proof inspired by 
Lemma 10.11 p.221 of \cite{Breiman}.

\begin{proof}[Proof of Lemma \ref{lemma_weak_conv}]
Inspecting the proof of \cite{Breiman}, one can see that it is sufficient to check that $\theta \mapsto \Re\left(f(\theta)\frac{1}{1- g(i\theta)}\right)$ is integrable at $\theta=0$. The rest of the proof may be applied similarly (with minor modification) in order to prove \eqref{weak_conv}. Since
$$
\Re\left(f(\theta)\frac{1}{1- g(i\theta)}\right)=\frac{\theta^2}{|1-g(i\theta)|^2}\frac{\Re\left(f(\theta)(1- \overline{g(i\theta)})\right)}{\theta^2}
$$
and 
$$
\frac{\theta^2}{|1-g(i\theta)|^2}\longrightarrow \frac{1}{\mu^2}\quad\mbox{as }\theta\to 0,
$$
it is sufficient to prove local integrability of $\frac{\Re\left(f(\theta)(1- \overline{g(i\theta)})\right)}{\theta^2}$ at $\theta=0$.
We compute further
$$
\Re\left(f(\theta)(1- \overline{g(i\theta)})\right)= \Re\left(f(\theta)\right)\Re\left(1- \overline{g(i\theta)}\right)-\Im\left(f(\theta)\right)\Im\left(1- \overline{g(i\theta)}\right).
$$
The first term is integrable at $0$ since
$$
\int_{-\varepsilon}^\varepsilon \frac{\left|\Re\left(f(\theta)\right)\Re\left(1- \overline{g(i\theta)}\right)\right|}{\theta^2} d\theta \le  \sup_{\theta \in [-\pi,\pi]} |f(\theta)| \times \int_{-\varepsilon}^\varepsilon \frac{1-\EE(\cos(\theta X_1))}{\theta^2}d\theta <+\infty.
$$
For the integrability of the second term, we need the assumption $\Im(f(\theta))=O(\theta)$  which implies the existence of a constant $C>0$ such that $|\Im(f(\theta))|\leq C|\theta|$ for $|\theta|\leq \varepsilon$. We use also the inequality $|\sin(x)|\leq x$, $x\in\mathbb{R}$. Using this, we have
\begin{eqnarray*}
\int_{-\varepsilon}^\varepsilon \left|\frac{\Im\left(f(\theta)\right)\Im\left(1- \overline{g(i\theta)}\right)}{\theta^2}\right| d\theta
&\le & \EE \int_{-\varepsilon}^\varepsilon \left|\frac{\Im\left(f(\theta)\right)}{\theta} \frac{\sin(\theta X_1)}{\theta}\right|d\theta \\
&\le & \EE \int_{-\varepsilon}^\varepsilon C |X_1|d\theta = 2\varepsilon C \EE(|X_1|)  <+\infty .
\end{eqnarray*}
\end{proof}

\begin{proof}[Proof of  Theorem \ref{thm_expansion_v_lattice}] Let us define
\begin{eqnarray*}
u_k&:=& U(k)-U(k-1)=\sum_{n=0}^\infty \PP(S_n=k),\quad k\in \nbN,
\end{eqnarray*}
with the convention $U(-1)=0$. We use the basic fact that $S_n$ has Fourier transform $g(i\theta)^n$ and that the probabilities $\PP(S_n=k)$'s are linked to the Fourier transform by
$$
\PP(S_n=k)=\frac{1}{2\pi} \int_{-\pi}^\pi e^{-ik\theta} g(i\theta)^n d\theta,\quad n\in\nbN,\quad k\in\nbN,
$$
which can be verified by writing $g(i\theta)^n=\nbE(e^{i\theta S_n})$ and using Fubini. Hence, by Lebesgue's dominated convergence and Fubini's theorems,
\begin{eqnarray*}
u_k&=& \lim_{r\to 1^-}\sum_{n=0}^\infty r^n\PP(S_n=k)\\
&=& \lim_{r\to 1^-}\sum_{n=0}^\infty r^n \frac{1}{2\pi} \int_{-\pi}^\pi e^{-ik\theta} g(i\theta)^n d\theta\\
&=& \lim_{r\to 1^-} \frac{1}{2\pi} \int_{-\pi}^\pi e^{-ik\theta} \frac{1}{1-r g(i\theta)}d\theta.
\end{eqnarray*}
Note that $\Im(u_k)=0$. We deduce, thanks to Lemma \ref{lemma_weak_conv},
\[
u_k= \Re(u_k)=\frac{1}{2\mu}+\frac{1}{2\pi} \int_{-\pi}^\pi \Re\left(e^{-ik\theta} \frac{1}{1- g(i\theta)}\right) d\theta.
\]
We apply the same argument to the i.i.d. r.v. $(X_n')_{n\in \nbN}$ with distribution $X_n'\sim \delta_1$. This yields, for all $k\in\nbN$, 
$$ 
1= \frac{1}{2}+\frac{1}{2\pi} \int_{-\pi}^\pi \Re\left(e^{-ik\theta} \frac{1}{1- e^{i\theta}}\right)d\theta,
$$
whence we deduce
\begin{multline}
u_k-  \frac{1}{\mu}= \frac{1}{2\pi} \int_{-\pi}^\pi \Re \left[e^{-ik\theta} \left(\frac{1}{1- g(i\theta)}- \frac{1}{\mu}\frac{1}{1- e^{i\theta}}\right)\right] d\theta \\
=  \frac{1}{2\pi} \int_{-\pi}^\pi  e^{-ik\theta} \left(\frac{1}{1- g(i\theta)}- \frac{1}{\mu}\frac{1}{1- e^{i\theta}}\right)  d\theta ,\quad k\in\nbN,
\label{proof_theo_lattice_0}
\end{multline}
the last line justified by the fact that the integral is convergent. The integrand function 
$$ z\mapsto e^{-kz}\left(\frac{1}{1- g(z)}- \frac{1}{\mu}\frac{1}{1- e^{z}}\right)$$ 
is meromorphic on the domain $\{z\in\mathbb{C}; -\pi\le\Im(z)\le \pi,\ 0\le \Re(z)\le r\}$, $r<R_0$. For $R_0-r$ small enough, the poles  inside this domain are exactly $z_1,\ldots,z_N$ 
(the pole at $z_0=0$ has been removed). 
Cauchy's residue Theorem with contour given in the left panel of Figure \ref{figure_contour} implies
\begin{multline}
u_k-  \frac{1}{\mu}= -\sum_{j=1}^N \mathrm{Res}\left(\frac{e^{-kz}}{1- g(z)} ;z_j\right)\\
+ \frac{1}{2\pi} \int_{-\pi}^\pi e^{-k(r+i\theta)} \left(\frac{1}{1- g(r+i\theta)}- \frac{1}{\mu}\frac{1}{1- e^{r+i\theta}}\right)d\theta.
\label{proof_theo_lattice_1}
\end{multline}

\begin{figure}[!h]
\begin{center}
   \includegraphics[width=0.47\textwidth]{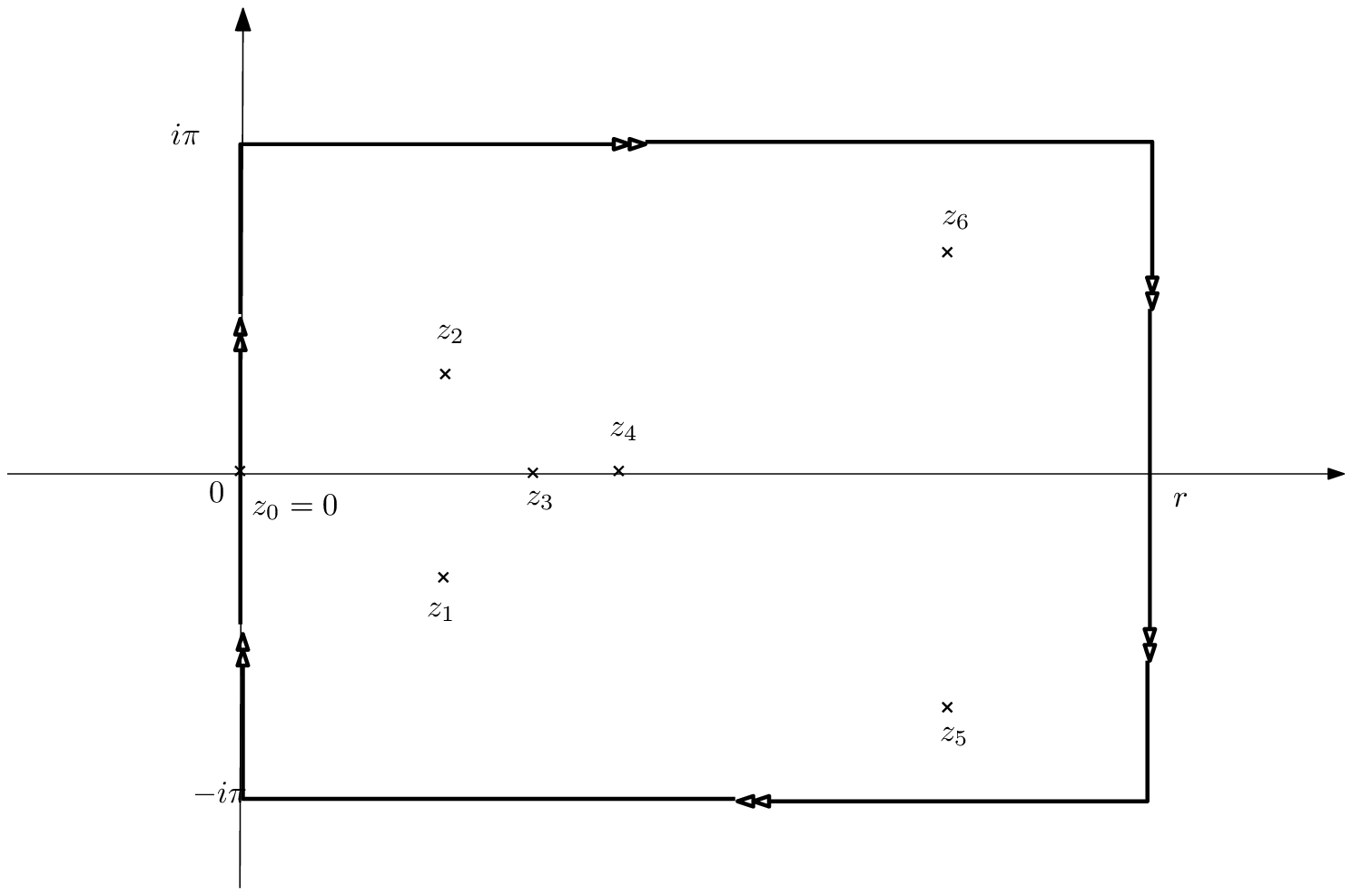} \qquad
   \includegraphics[width=0.47\textwidth]{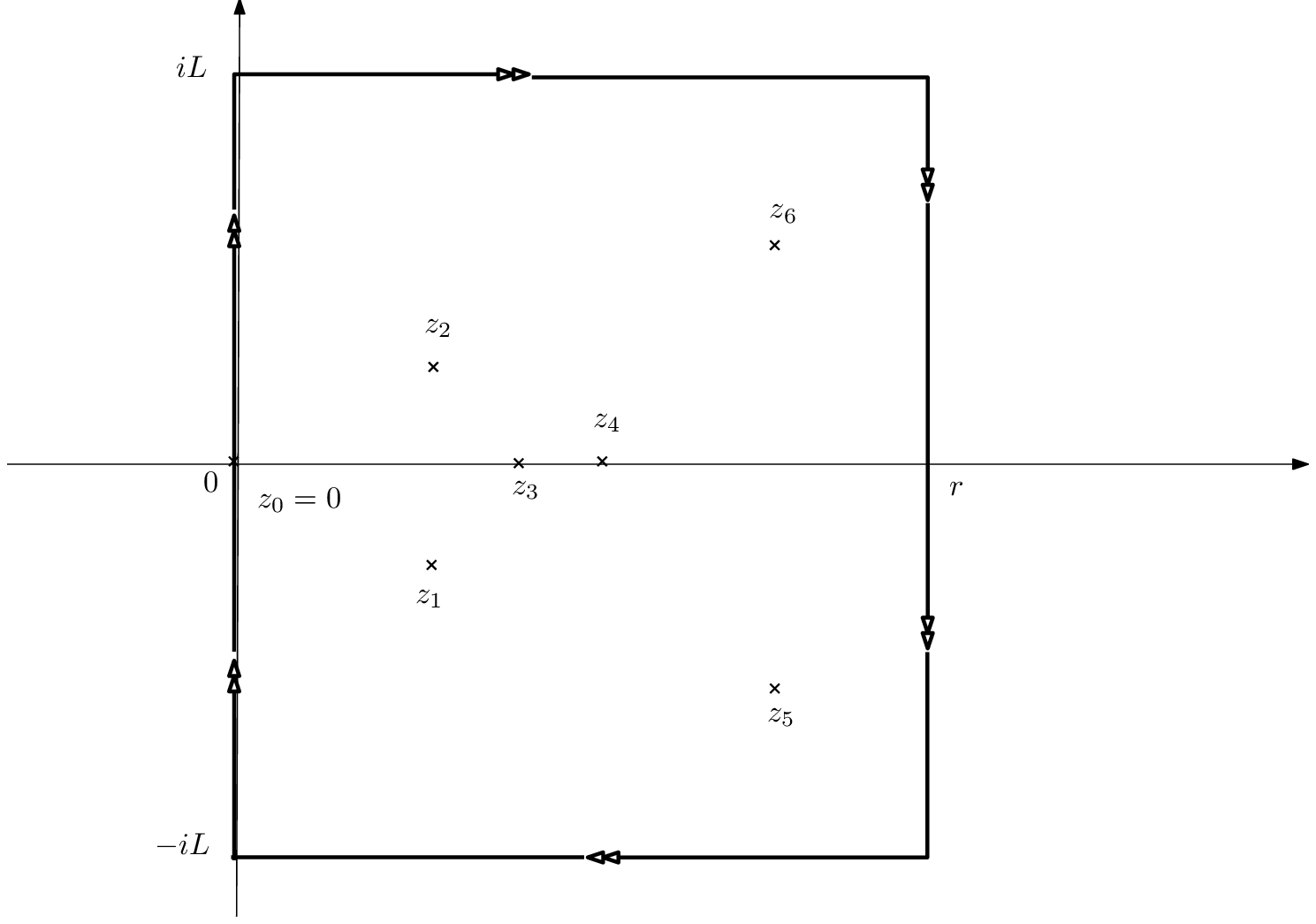}
\end{center}
\caption{Contours used for the application of Cauchy's residue Theorem in the lattice case (left) and the non-lattice case (right)}
\label{figure_contour}
\end{figure}

Using this, we obtain finally
\begin{eqnarray}
&&v(k)=\sum_{m=0}^\infty [v(k+m)-v(k+m+1)]= \sum_{m=0}^\infty [1/\mu-u(k+m+1)] \nonumber\\
&=&\sum_{m=0}^\infty \sum_{j=1}^N\mathrm{Res}\left(\frac{e^{-(k+m+1)z}}{1- g(z)} ;z_j\right)\nonumber\\
&&-
\frac{1}{2\pi}\sum_{m=0}^\infty  \int_{-\pi}^\pi e^{-(k+m+1)(r+i\theta)} \left(\frac{1}{1- g(r+i\theta)}
 - \frac{1}{\mu}\frac{1}{1- e^{r+i\theta}}\right)d\theta\nonumber \\
&=& \sum_{j=1}^N\mathrm{Res}\left(\frac{e^{-kz}}{(e^z-1)(1- g(z))} ;z_j\right)\nonumber\\
&&- \frac{e^{-kr}}{2\pi}  \int_{-\pi}^\pi \frac{e^{-ik\theta}}{e^{(r+i\theta)}-1} \left(\frac{1}{1- g(r+i\theta)}
- \frac{1}{\mu}\frac{1}{1- e^{r+i\theta}}\right)d\theta.\label{expansion_v_lattice_proof_c1}
\end{eqnarray}
Here we have used that all residues $\mathrm{Res}\left(\frac{e^{-(k+m+1)z}}{1- g(z)} ;z_j\right)$ are obtained by integrating $\frac{e^{-(k+m+1)z}}{1- g(z)}$ on a compact contour around $z_j$, so that exchanging $\sum_{m=0}^\infty$ and $\mathrm{Res}(.)$ is indeed justified by Fubini's theorem. By the Lebesgue lemma,  the last term in (\ref{expansion_v_lattice_proof_c1}) is $o(e^{-rk})$ and this proves Equation \eqref{expansion_v_lattice}. 
\end{proof}

\subsection{Proof of Proposition \ref{prop_A}}\label{section_proof_theo_expansion}
 We prove that if $F$ is absolutely continuous, then for all $R_0<R$,
\begin{equation}\label{eq:RL}
\lim_{\theta\to\pm\infty}\sup_{0\leq r\leq R_0}|g(r+i\theta)|=0.
\end{equation}
Clearly, Equation \eqref{eq:RL} implies Proposition \ref{prop_A}. 
It is worth noting that this is a uniform version of the Riemann-Lebesgue Lemma. Let us set for all $\theta\in\nbR$, $f_\theta(z):=g(z+i\theta)$, $z\in\nbC$. Since $F$ admits a density, the Riemann-Lebesgue Lemma implies that $f_\theta(z)$ converges pointwise to $0$ as $\theta\to \infty$ when $z\in\Omega:=\{z\in \nbC|\ 0\le \Re(z)\le R_0\}$. Let us now note that we have the uniform bound
$$
|f_\theta(z)|= \left| \nbE[e^{(z+i\theta)X}]\right|\le \nbE[e^{R_0 X}]<+\infty,\quad \forall z\in\Omega.
$$
By Theorem 1.6.4 p.26 of \cite{Schiff},  $f_\theta(z)$ converges uniformly towards $0$ as $\theta\to \infty$ when $z$ lies in any compact subset $K\subset \Omega$. Picking in particular $K:=\{z\in \nbC|\ \Re(z)\in [0,R_0]\mbox{ and } \Im(z)=0 \}$ yields
$$\sup_{z\in K}|f_\theta(z)|= \sup_{r\in [0,R_0]}|g(r+i\theta)|\longrightarrow 0,\quad \theta\to\infty,$$ which we were set to prove.\hfill $\Box$

\subsection{Proof of Theorem \ref{thm_expansion_v}}\label{sub_section_proof_theo_expansion}
The proof of Theorem \ref{thm_expansion_v} follows the same lines as the proof of the main Theorem in Stone \cite{Stone}.

For the sake of clarity, we divide the proof into several steps. 

\medskip\noindent
{\bf Step 1:} We prove that Condition \eqref{condA} implies that the equation $g(z)=1$ has only a finite number of solutions in $S_{R_0}$.
Condition \eqref{condA} entails the existence of  $M>0$ such that the equation $g(r+i\theta)=1$ has no solution with $0\leq r\leq R_0$ and $|\theta|>M$. 
Since furthermore the obvious bound $|g(z)|<1$ if $\Re(z)<0$ exclude solutions in the half-plane $\Re(z)<0$, 
the only possible solutions of  $g(z)=1$ in $S_{R_0}$ belong to the compact set $K=\{z\in\mathbb{C};0\leq \Re(z)\leq R_0,-M\leq\Im(z)\leq M\}$. 
The function $g(z)-1$ being holomorphic, its zeros are isolated. Hence the equation $g(z)=1$ has finitely many solution in the compact set $K$.

\medskip\noindent
{\bf Step 2:} Following Stone  \cite{Stone},  we introduce for positive $a$ and $h$,
$$
V(x,h,a):=  \nbE [U(x+h-aZ)-U(x-aZ)]=\mathbb{E}[U(x-aZ,h)]
$$
with
$$
U(x,h)=U(x+h)-U(x),
$$
and $Z$ a random variable with standard Gaussian distribution. Using Fourier analysis, one can show (similarly to Equation (7) in \cite{Stone})
\begin{eqnarray}
V(x,h,a)&=&\frac{h}{2\mu}+\frac{h}{2\pi}\int_{-\infty}^{+\infty}\Re\left\{
e^{-ix\theta}\frac{1-e^{-ih\theta}}{ih\theta}e^{-a^2\theta^2/2}\frac{1}{1-g(i\theta)}\right\}d\theta \nonumber\\
&=&\frac{h}{2\mu}+I(x,h,a). \label{V_int}
\end{eqnarray}
Setting $T(h,z):=\frac{1-e^{-hz}}{hz}$ and $\varphi(z):=\frac{1}{1-g(z)}+\frac{1}{\mu z}$, we then split the integral $I(x,h,a)$ into 
\begin{eqnarray}
I(x,h,a)&=& I_1(x,h,a)-I_2(x,h,a),\label{split_int_max_bis}\\
I_1(x,h,a)&=& \frac{h}{2\pi}\int_{-\infty}^{+\infty}\Re\left\{ e^{-ix\theta}T(h,i\theta)e^{-a^2\theta^2/2}\varphi(i\theta)\right\}d\theta, \nonumber\\
I_2(x,h,a)&=& \frac{h}{2\pi}\int_{-\infty}^{+\infty}\Re\left\{ e^{-ix\theta}T(h,i\theta)e^{-a^2\theta^2/2}  \frac{1}{i\mu\theta}\right\} d\theta.\nonumber
\end{eqnarray}
The analysis of the first term $I_1(x,h,a)$ relies on Cauchy's residue theorem. 
In the domain $S_{R_0}$, the function $\varphi(.)$ is meromorphic with poles $z_1,\ldots,z_N$ (note that the pole at $z_0=0$ has been removed). We apply Cauchy's residue theorem 
with the contour given in the right panel of Figure \ref{figure_contour} and we let the parameter $L$ tend to $+\infty$. 
Thanks to condition \eqref{condA}, the contribution of the horizontal parts of the contour vanishes as $L\to+\infty$ and we get, for all $r\in(\Re(z_N),R_0)$,
\begin{multline}\label{proof_thm_expansion_v_1}
 \frac{1}{2\pi}\int_{-\infty}^{+\infty} e^{-ix\theta}T(h,i\theta)e^{-a^2\theta^2/2}\varphi(i\theta)d\theta
\\
= -\sum_{j=1}^N \mathrm{Res}\left(e^{-xz}T(h,z)e^{a^2z^2/2}\varphi(z);z_j \right)+ \Re \tilde{I}_1(x,h,a,r)
\end{multline}
with
\begin{equation}\label{def_I_tilde_1}
\tilde{I}_1(x,h,a,r)= \frac{1}{2\pi}\int_{-\infty}^{+\infty} e^{-x(r+i\theta)}T(h,r+i\theta)e^{a^2(r+i\theta)^2/2}\varphi(r+i\theta)d\theta.
\end{equation}
Note that the sum of residues is a real number because for conjugate poles $z$ and $\bar z$, the residues at $z$ and $\bar z$ are also conjugate so that the sum is real.
Similarly for the second term $I_2(x,h,a)$, Cauchy's residue theorem yields
\begin{equation}\label{proof_thm_expansion_v_1bis} \frac{1}{2\pi}\int_{-\infty}^{+\infty} e^{-ix\theta}T(h,i\theta)e^{-a^2\theta^2/2}\frac{1}{i\mu\theta} d\theta
= - \frac{1}{2\mu}+  \tilde{I}_2(x,h,a,r)
\end{equation}
with
\begin{equation}\label{def_I_tilde_2}
\tilde{I}_2(x,h,a,r)= \frac{1}{2\pi}\int_{-\infty}^{+\infty} e^{-x(r+i\theta)}T(h,r+i\theta)e^{a^2(r+i\theta)^2/2}\frac{1}{\mu (r+i\theta)}d\theta.
\end{equation}
Equations \eqref{V_int}, \eqref{split_int_max_bis}, \eqref{proof_thm_expansion_v_1} and \eqref{proof_thm_expansion_v_1bis} together yield
\begin{multline}\label{proof_thm_expansion_v_2_0}
V(x,h,a)=\frac{h}{\mu}-h\sum_{j=1}^N \mathrm{Res}\left(e^{-xz}T(h,z)e^{a^2z^2/2}\varphi(z);z_j \right)\\
+h\Re\{\tilde{I}_1(x,h,a,r)\}-h\Re\{\tilde{I}_2(x,h,a,r)\}. 
\end{multline}
Note that the Rieman-Lebesgue Lemma implies that the second term $\widetilde I_2(x,h,a,r)$ satisfies, for all $r>0$,
\begin{equation}
\tilde{I}_2(x,h,a,r)= o(e^{-rx}) \quad \mbox{uniformly for $a$ and $h$ in bounded sets}. 
\label{estimate_J_2}
\end{equation}

\medskip \noindent
{\bf Step 3:} We analyze here the term $\tilde{I}_1(x,h,a,r)$ and, similarly as Equation (8) from \cite{Stone}, we show that 
\begin{equation}\label{proof_thm_expansion_v_8}
\tilde{I}_1(x,h,a,r)=o(e^{-rx}(1+|\ln(a)|))\quad \mbox{uniformly for $a$ and $h$ in bounded sets}.
\end{equation}
The argument for this statement is given almost without proof in \cite{Stone} but can be
adapted from a similar argument  in \cite{Stone2} in the
following way. One notices that
$$
\varphi(z)=\frac{g(z)-1-\mu z}{\mu^2z^2} + \frac{(g(z)-1-\mu z)^2}{\mu^2 z^2(1-g(z))},
$$
so that we have, for $a<1$,
\begin{equation}
\tilde{I}_1(x,h,a,r)= J_1 + J_2+J_3+J_4,\label{proof_thm_expansion_v_9} 
\end{equation}
with
\begin{eqnarray}
J_1&=&\frac{1}{2\pi}\int_{-\infty}^{+\infty} e^{-x(r+i\theta)}T(h,r+i\theta)
e^{a^2(r+i\theta)^2/2}  \frac{g(r+i\theta)-1-\mu (r+i\theta)}{\mu^2
(r+i\theta)^2} d\theta,\nonumber\\
J_2&=& \frac{1}{2\pi}\int_{|\theta|\le 1} e^{-x(r+i\theta)}T(h,r+i\theta)
e^{a^2(r+i\theta)^2/2} \frac{(g(r+i\theta)-1-\mu
(r+i\theta))^2}{\mu^2 (r+i\theta)^2(1-g(r+i\theta))} d\theta.
\nonumber\\
J_3&=&  \frac{1}{2\pi}\int_{1< |\theta|< a^{-2}} e^{-x(r+i\theta)}T(h,r+i\theta)
e^{a^2(r+i\theta)^2/2} \frac{(g(r+i\theta)-1-\mu
(r+i\theta))^2}{\mu^2 (r+i\theta)^2(1-g(r+i\theta))} d\theta.
\nonumber\\ 
J_4&=&  \frac{1}{2\pi}\int_{|\theta|\ge a^{-2}} e^{-x(r+i\theta)}T(h,r+i\theta)
e^{a^2(r+i\theta)^2/2} \frac{(g(r+i\theta)-1-\mu
(r+i\theta))^2}{\mu^2 (r+i\theta)^2(1-g(r+i\theta))} d\theta.
\nonumber 
\end{eqnarray}
Since $\frac{g(z)-1-\mu z}{\mu^2 z^2}$ is analytic on $S_R$ (the
pole at $z=0$ has been removed), Cauchy's residue theorem and Lebesgue's
Lemma yield 
\[
J_1=o(e^{-rx})\quad \mbox{uniformly for $a$ and $h$ in bounded sets}.
\]
For the other terms, we use the fact that the function
$$F(\theta):=T(h,r+i\theta)
e^{a^2(r+i\theta)^2/2} \frac{(g(r+i\theta)-1-\mu
(r+i\theta))^2}{\mu^2 (r+i\theta)^2(1-g(r+i\theta))}$$ 
is bounded for $|\theta|\le 1$ and satisfies
$$F'(\theta)=O\left(\frac{1}{\theta}\right)\quad \mbox{as } |\theta|\ge 1$$
and
$$ F(\theta)=O\left( e^{-a^2 \theta^2/2}\right)\quad \mbox{as } |\theta|\ge 1.$$
This justifies the following estimates
\begin{eqnarray*}
|J_2| & =  &\frac{1}{2\pi} \left|\int_{|\theta|\le 1} e^{-x(r+i\theta)} F(\theta) d\theta \right| =O(e^{-rx}),\\
|J_3| & =  &\frac{1}{2\pi} \left|\int_{1< |\theta|< a^{-2}} e^{-x(r+i\theta)} F(\theta) d\theta \right|= O(e^{-rx}|\log(a)|),\\
|J_4| & =  &\frac{1}{2\pi} \left|\int_{ |\theta|\ge a^{-2}} e^{-x(r+i\theta)} F(\theta) d\theta \right|=O(e^{-rx}).
\end{eqnarray*}
Gathering the above inequalities in (\ref{proof_thm_expansion_v_9}), we  obtain (\ref{proof_thm_expansion_v_8}). The term $O(e^{-rx}|\log(a)|)$ can be replaced by
$o(e^{-rx}|\log(a)|)$ because $r\in (\Re(z_N),R_0)$ is arbitrary.
Equations  \eqref{proof_thm_expansion_v_2_0}, \eqref{estimate_J_2} and \eqref{proof_thm_expansion_v_8} together yield
\begin{equation}\label{proof_thm_expansion_v_2}
V(x,h,a)-\frac{h}{\mu}=-h\sum_{j=1}^N \mathrm{Res}\left(e^{-xz}T(h,z)e^{a^2z^2/2}\varphi(z);z_j \right)+o(e^{-rx}(1+|\ln(a)|)).
\end{equation}

\noindent
\medskip
{\bf Step 4:} We prove the following inequality: 
\begin{multline}
V(x+e^{-rx},1-2e^{-rx},e^{-rx}/x)-Me^{-rx}\le U(x,1)\\
\le (1-e^{-rx})^{-1}V(x-e^{-rx},1+2e^{-rx},e^{-rx}/x).
\label{proof_thm_expansion_v_3}
\end{multline} 
Recall the definition  $V(x,h,a)=\nbE[U(x-aZ,h)]$ with $U(x,h)=U(x+h)-U(x)$ and 
$Z$ a standard Gaussian random variable. Using the fact that $U(.)$ is non decreasing, we check, for $|y|\le e^{-rx}$,
$$
U(x+e^{-rx}-y, 1-2e^{-rx})\le U(x,1)\le U(x-e^{-rx}-y, 1+2e^{-rx}).
$$
Let $x_0>0$ be such that, for all $x\ge x_0$, $\nbP(|Z|\ge x)\le e^{-rx}\leq 1/2$. 
Besides, the fact that renewal function $ x\mapsto U(x)$ is sub-additive implies that there exists some constant $M$ independent from $h$ and $x$ such that 
\begin{equation}\label{eq:bound}
0\le U(x,h)\le M,\quad \mbox{for all $x>0$ and $h$ in a bounded set}.
\end{equation}
By the definition $V(x,h,a)=\nbE[U(x-aZ,h)]$, both sides of Equation \eqref{proof_thm_expansion_v_3}
are obtained by splitting 
$$V(x\pm e^{-rx},h,a)=\nbE[U(x\pm e^{-rx}-aZ,h)\nbu_{\{ |Z|\le x\}}]+\nbE[U(x\pm e^{-rx}-aZ,h)\nbu_{\{ |Z|> x\}}]$$
with the values $a=a(x)=e^{-rx}/x$ and $h=h(x)=1\pm 2e^{-rx}$. 

\noindent
\medskip
{\bf Step 5:} Let us now prove the following estimate
\begin{equation}
U(x,1)-\frac{1}{\mu}=-\sum_{j=1}^N \mathrm{Res}\left(e^{-xz}T(1,z)\varphi(z);z_j \right)+o(e^{-rx}).
\label{proof_thm_expansion_v_4}
\end{equation}
This is roughly obtained from \eqref{proof_thm_expansion_v_2} by setting 
\[
a=a(x)=e^{-rx}/x\quad \mbox{and}\quad h=h_\pm(x)=1\pm 2e^{-rx}.
\] 
This can be made rigorous thanks to Equation \eqref{proof_thm_expansion_v_3}. We have, uniformly in the neighborhood of $z_j$, $j=1,\ldots,N$,
\begin{eqnarray*}
e^{-(x\pm  e^{-rx} )z}T(h(x),z)e^{-a(x)^2 z^2/2}&=&e^{-xz}[1+o(e^{-rx})]\\
&& .\left[T(1,z)+ o(e^{-rx})\right][1+o(e^{-rx})]\\
&=&e^{-xz} T(1,z)+ o(e^{-rx})
\end{eqnarray*}
so that
\begin{equation}
\mathrm{Res}\left(e^{-(x\pm  e^{-rx} )z}T(h,z)e^{a^2z^2/2}\varphi(z);z_j \right)
=\mathrm{Res}\left(e^{-xz}T(1,z)\varphi(z);z_j \right) + o(e^{-rx}).\label{proof_thm_expansion_v_5}
\end{equation}
Besides, left hand side of Equation \eqref{proof_thm_expansion_v_3} and Equation \eqref{proof_thm_expansion_v_5} entail
\begin{eqnarray}
&&U(x,1)-\frac{1}{\mu}+\sum_{j=1}^N \mathrm{Res}\left(e^{-xz}T(1,z)\varphi(z);z_j \right)\nonumber\\
&\ge& V(x+e^{-rx},1-2e^{-rx},e^{-rx}/x)-Me^{-rx} -\frac{1}{\mu}\nonumber\\
&& +\sum_{j=1}^N \mathrm{Res}\left(e^{-xz}T(1,z)\varphi(z);z_j \right)\nonumber\\
&=& V(x+e^{-rx},h(x),a(x))-Me^{-rx} \nonumber\\
&&-\frac{1}{\mu}+\sum_{j=1}^N \mathrm{Res}\left(e^{-(x\pm  e^{-rx} )z}T(h,z)e^{a^2z^2/2}\varphi(z);z_j \right)
 +o(e^{-rx}) \label{proof_thm_expansion_v_6}
\end{eqnarray}
Now from \eqref{proof_thm_expansion_v_2}, we have
\begin{multline*}
V(x+e^{-rx},h(x),a(x)) -\frac{h(x)}{\mu} +h(x)\sum_{j=1}^N \mathrm{Res}\left(e^{-(x\pm  e^{-rx} )z}T(h,z)e^{a^2z^2/2}\varphi(z);z_j \right) \\=o(xe^{-rx}).
\end{multline*}
Together with \eqref{proof_thm_expansion_v_6} and since $h(x)=1+O(e^{-rx})$ , this yields
\begin{equation}
U(x,1)-\frac{1}{\mu}+\sum_{j=1}^N \mathrm{Res}\left(e^{-xz}T(1,z)\varphi(z);z_j \right)
\ge o(xe^{-rx}),\label{proof_thm_expansion_v_7}.
\end{equation}
A similar upper bound is proved in the same way, using the right hand side of Equation \eqref{proof_thm_expansion_v_3}.
We deduce
\[
U(x,1)-\frac{1}{\mu}+\sum_{j=1}^N \mathrm{Res}\left(e^{-xz}T(1,z)\varphi(z);z_j \right)=o(xe^{-rx}).
\]
We can replace $o(xe^{-rx})$ by $o(e^{-rx})$ because $r$ is arbitrary in  $(\Re(z_N),R_0)$. This proves Eq. \eqref{proof_thm_expansion_v_4}.

\noindent
\medskip
{\bf Step 6:} We finally prove Equation \eqref{expansion_v}. Since $\lim_{x\to \infty}v(x)=0$, we have
\begin{equation}\label{eq:step6_1}
 v(x)= \sum_{k=0}^\infty [v(x+k)-v(x+k+1)]= \sum_{k=0}^\infty [-U(x+k,1)+1/\mu].
\end{equation}
Using Equation \eqref{proof_thm_expansion_v_4}, we deduce, for all $ r\in(\Re(z_N),R_0)$,
\begin{eqnarray}
v(x)&=& \sum_{k=0}^\infty \sum_{j=1}^N \mathrm{Res}\left(e^{-(x+k)z}T(1,z)\varphi(z);z_j \right)+o(e^{-r(x+k)})\nonumber\\
&=& \sum_{j=1}^N \mathrm{Res}\left(\frac{e^{-xz}}{1-e^{-z}}T(1,z)\varphi(z);z_j \right)+o(e^{-rx})\nonumber\\
&=& \sum_{j=1}^N \mathrm{Res}\left(\frac{e^{-xz}}{z(1-g(z))};z_j \right)+o(e^{-rx})\label{eq:step6_2}.
\end{eqnarray}
Equation \eqref{expansion_v} follows easily.\hfill $\Box$

\subsection{Proof of Corollaries \ref{cor_expansion_exact} and  \ref{cor_expansion_exact_2}}
\begin{proof}[Proof of Corollary \ref{cor_expansion_exact}]
We consider first the lattice case. According to Condition \eqref{eq:cond_lattice}, one can consider $(r_n)_{n\geq 1}$ an increasing sequence such that $r_n\to +\infty$ and 
\[
\lim_{n\to+\infty} \sup_{-\pi\leq \theta\leq \pi} \left|\frac{1}{e^{r_n} (1-g(r_n+i\theta))} \right|=0.
\]
According to Equation \eqref{expansion_v_lattice_proof_c1},
\begin{multline*}
v(k)= \sum_{j=1}^{N(r_n)}\mathrm{Res}\left(\frac{e^{-kz}}{(e^z-1)(1- g(z))} ;z_j\right)\\
-\frac{1}{2\pi}  \int_{-\pi}^\pi \frac{e^{-k(r_n+i\theta)}}{e^{r_n+i\theta}-1} \left(\frac{1}{1- g(r_n+i\theta)}- \frac{1}{\mu}\frac{1}{1- e^{r_n+i\theta}}\right)d\theta
\end{multline*}
with $N(r_n)$ the number of solutions of the equation $g(z)=1$ in $S_{r_n}^f$.
Lebesgue's dominated convergence Theorem implies, for $k\geq 1$, 
\[
\lim_{n\to+\infty}\int_{-\pi}^\pi \frac{e^{-k(r_n+i\theta)}}{e^{r_n+i\theta}-1} \left(\frac{1}{1- g(r_n+i\theta)}- \frac{1}{\mu}\frac{1}{1- e^{r_n+i\theta}}\right)d\theta= 0.
\]
This yields the result (\ref{exact_expansion_v_lattice}).

We now consider the non-lattice case. Thanks to Assumption (\ref{eq:cond_nonlattice}), let $(r_n)_{n\geq 1}$ be such that $r_n\to +\infty$ and
$$\sup_{\theta\in\mathbb{R}}\left|\frac{1}{ r_n(1-g(r_n+i\theta))}\right| \longrightarrow 0,\quad n\to\infty.$$
We will prove below that setting $h=1$, $r=r_n$ and $a=a_n(x)=e^{-r_n x}/x$ in  Equation \eqref{proof_thm_expansion_v_2_0} and letting $n\to +\infty$, we obtain
\begin{equation}\label{eq:cor}
U(x,1)=\frac{1}{\mu}-\sum_{j=1}^\infty \mathrm{Res}\left(e^{-xz}T(1,z)\varphi(z);z_j \right) = \frac{1}{\mu}-\sum_{j=1}^\infty \frac{e^{-xz_j}(e^{-z_j}-1)}{z_j g'(z_j)},
\end{equation}
remembering that roots $(z_j)_{j\in\nbN}$ are simple. This is justified as follows:
\begin{itemize}[leftmargin=*]
\item[-] Condition \eqref{eq:bound} together with Lebesgue's dominated convergence Theorem implies
\begin{equation}\label{conv_V_h_a}
V(x,h,a)=V(x,1,e^{-r_n x}/x)\to U(x,h)\quad \mbox{as}\ n\to +\infty.
\end{equation} 
\item[-] Recalling that $\tilde I_1(x,h,a,r)$ and $\tilde I_2(x,h,a,r)$ are respectively given by (\ref{def_I_tilde_1}) and (\ref{def_I_tilde_2}), the integral term $\tilde I_1(x,h,a,r)-\tilde I_2(x,h,a,r)$ with $h=1$, $r=r_n$ and $a=e^{-r_n x}/x$ converge to $0$ as $n\to+\infty$. Indeed, condition \eqref{eq:cond_lattice} and the upper bound $|T(h,r+i\theta)|\leq 2/r$ imply
\begin{eqnarray*}
&&|\tilde I_1(x,h,a,r)-\tilde I_2(x,h,a,r)|\\
&= & \frac{1}{2\pi}\left| \int_{-\infty}^{+\infty} e^{-x(r+i\theta)}T(h,r+i\theta)e^{a^2(r+i\theta)^2/2}\frac{1}{1-g(r+i\theta)}d\theta\right| \\
&\leq &\frac{e^{-xr}}{\pi} \int_{-\infty}^{+\infty} \left| \frac{1}{r(1-g(r+i\theta))}\right|e^{a^2r^2/2-a^2\theta^2/2}d\theta \\
&\leq & \sup_{\theta\in\mathbb{R}}\left| \frac{1}{r(1-g(r+i\theta))}\right| \times \frac{e^{-xr}}{\pi}\frac{e^{a^2r^2/2}\sqrt{2\pi}}{a}\\
&\leq& \sup_{\theta\in\mathbb{R}}\left| \frac{1}{r_n(1-g(r_n+i\theta))}\right| \times \frac{\sqrt{2}x}{\sqrt{\pi}}\exp\left( \frac{r_n^2e^{-2r_nx}}{2x}\right).
\end{eqnarray*}  
This last quantity goes to $0$ as $r_n\to +\infty$.
\item[-] Let us recall inequality $|e^z-1|\le e|z|$ for all $|z|\le 1$. As roots $(z_n)_{n\in\nbN}$ are simple and real, one has for all $j=1,...,N(r_n)$, and for $n$ large enough,
\begin{multline*}
\left|\mathrm{Res}\left(e^{-(x\pm  e^{-r_nx} )z}T(1,z)e^{a_n(x)^2z^2/2}\varphi(z);z_j\right)
- \mathrm{Res}\left(e^{-xz}T(1,z) \varphi(z);z_j\right)\right|\\
=\left|\mathrm{Res}\left(\left[ e^{\pm  e^{-r_nx} z}e^{a_n(x)^2z^2/2}-1\right] e^{-xz}T(1,z) \varphi(z);z_j\right)\right|\\
=\left| \left[ e^{\pm  e^{-r_nx} z_j}e^{a_n(x)^2z_j^2/2}-1\right]\right|.\left|\mathrm{Res}\left( e^{-xz}T(1,z) \varphi(z);z_j\right)\right|\\
\le  e\left| \pm e^{-r_nx}z_j +a_n(x)^2z_j^2/2 \right|.\left|\mathrm{Res}\left( e^{-xz}T(1,z) \varphi(z);z_j\right)\right|\\
\le M e^{-2 r_nx}r_n^2 .\left|\mathrm{Res}\left( e^{-xz}T(1,z) \varphi(z);z_j\right)\right|
\end{multline*}
where $M=M(x)$ is a constant independent from $n$ and $j$, but which may depend on $x>0$. One thus deduces
\begin{multline}
\sum_{j=1}^{N(r_n)}\left|  \mathrm{Res}\left(e^{-(x\pm  e^{-r_nx} )z}T(1,z)e^{a_n(x)^2z^2/2}\varphi(z);z_j\right)\right.\\
\left. - \mathrm{Res}\left(e^{-x z}T(1,z) \varphi(z);z_j\right)\right|\le  M e^{-2 r_nx}r_n^2 \sum_{j=1}^{N(r_n)} \left| \mathrm{Res}\left(e^{-x z}T(1,z) \varphi(z);z_j\right)\right|
\label{inequality_infinite_sum}
\end{multline}
which tends to zero as $n\to\infty$ because of Assumption (\ref{exact_expansion_abs_convergence}).
\item[-] The fact that $\tilde I_1(x,h,a_n(x),r_n)-\tilde I_2(x,h,a_n(x),r_n)$ tends to $0$ as $n\to\infty$ and convergence (\ref{conv_V_h_a}) (with $h=1$) as well as inequality (\ref{inequality_infinite_sum}) implies \eqref{eq:cor} from (\ref{proof_thm_expansion_v_2_0}).
\end{itemize}
The end of the proof follows easily, as in Step 6 of the proof of Theorem \ref{thm_expansion_v}: Equations \eqref{eq:step6_1} and \eqref{eq:cor} imply (\ref{exact_expansion_v_non_lattice}).
\end{proof}

\begin{proof}[Proof of Corollary \ref{cor_expansion_exact_2}]
In the lattice case, we deduce from Corollary~\ref{cor_expansion_exact} that
\begin{eqnarray*}
u(k)&=&U(k)-U(k-1)=v(k)-v(k-1)+\frac{1}{\mu}\\
&=& \frac{1}{\mu}+\sum_{j=1}^N \mathrm{Res}\left(\frac{e^{-kz}-e^{-(k-1)z}}{(e^z-1)(1-g(z))};z_j \right)\\
&=& \frac{1}{\mu}-\sum_{j=1}^N \mathrm{Res}\left(\frac{e^{-kz}}{1-g(z)};z_j \right).
\end{eqnarray*}
Finally, note that $z_0=0$, $g(0)=1$, $g'(0)=\mu$ so that
\[
\frac{1}{\mu}=\mathrm{Res}\left(\frac{e^{-kz}}{1-g(z)};z_0 \right).
\]
In the non-lattice case, we deduce from  Corollary \ref{cor_expansion_exact} that
\begin{eqnarray*}
u(x)&=&\frac{\mathrm{d}U(x)}{\mathrm{d}x} =\frac{1}{\mu}+\frac{\mathrm{d}v(x)}{\mathrm{d}x}\\
&=& \frac{1}{\mu}-\sum_{j=1}^N \mathrm{Res}\left(\frac{e^{-kz}}{1-g(z)};z_j \right).
\end{eqnarray*}
The result follows since $\frac{1}{\mu}=\mathrm{Res}\left(\frac{e^{-kz}}{1-g(z)};z_0 \right)$.
\end{proof}

\section{Proofs for section \ref{sec:ruin}}

\subsection{Proof of Theorem \ref{prop_exp_ruin}}
Assumption \eqref{condB} that the moment generating function of $G$ is finite everywhere is equivalent to the fact that the tail function $\bar G=1-G$ decreases super-exponentially fast, i.e.  $\bar G$ is in $\mathcal{D}$ with
\begin{equation}
{\cal D}:=\left\{ f:[0,+\infty)\longrightarrow \nbC\left|\ \forall r>0,\quad f(x)=o(e^{-rx})\mbox{ as } x\to \infty\right.\right\}.
\label{def_cal_D}
\end{equation}
The following lemma can be verified easily.
\begin{lemm}\label{lemme_D}
Let $f\in {\cal D}$, then
\begin{enumerate}
	\item $x\mapsto e^{zx}f(x)$ belongs to ${\cal D}$ for all $z\in\nbC$;
	\item $x\mapsto \displaystyle \int_x^\infty f(y)dy$ belongs to ${\cal D}$.
\end{enumerate}
\end{lemm}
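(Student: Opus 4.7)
Both assertions are meant to follow directly from the definition of $\mathcal{D}$. The strategy in each case is to absorb the extra factor (either the exponential $e^{zx}$ or the tail integral $\int_x^\infty$) into a slightly weaker choice of the decay exponent, exploiting the fact that membership in $\mathcal{D}$ grants decay faster than $e^{-rx}$ for \emph{every} $r>0$, not merely for one such $r$.

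For part (1), I would fix an arbitrary $r>0$ and observe that $|e^{zx}f(x)|=e^{\Re(z)x}|f(x)|$. I would then apply the hypothesis $f\in\mathcal{D}$ with the exponent $r'=r+|\Re(z)|+1>0$, which yields $|f(x)|=o(e^{-r'x})$. Multiplying by $e^{\Re(z)x}$ gives $|e^{zx}f(x)|=o(e^{(\Re(z)-r')x})$, and since $\Re(z)-r'\leq -(r+1)<-r$, this is $o(e^{-rx})$. Because $r>0$ was arbitrary, $x\mapsto e^{zx}f(x)$ lies in $\mathcal{D}$.

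For part (2), I would again fix $r>0$, choose any $r'>r$ (for instance $r'=r+1$), and use $f\in\mathcal{D}$ to extract some $M>0$ with $|f(y)|\leq e^{-r'y}$ for $y\geq M$. This simultaneously guarantees absolute convergence of $\int_x^\infty f(y)\,dy$ for $x\geq M$ and provides the bound
\[
\left|\int_x^\infty f(y)\,dy\right|\leq \int_x^\infty e^{-r'y}\,dy=\frac{e^{-r'x}}{r'}=o(e^{-rx}),\quad x\to\infty,
\]
which is the membership condition. Arbitrariness of $r$ finishes the argument.

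There is no genuine obstacle in either statement; the whole proof is a short bookkeeping exercise with the definition of $\mathcal{D}$. The only tacit regularity needed is local (Lebesgue) integrability of $f$ so that $\int_x^\infty f(y)\,dy$ makes sense for large $x$, which is automatic in the intended applications (where $f$ will be a continuous tail function such as $\bar G$ or an exponentially weighted version thereof).
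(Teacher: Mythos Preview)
Your proof is correct and is exactly the routine verification the paper has in mind; the paper itself provides no argument beyond the remark that the lemma ``can be verified easily.'' There is nothing to add.
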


\begin{proof}[Proof of Theorem \ref{prop_exp_ruin}]
It is well known  that $\psi(x)$ satisfies the following defective renewal equation
\begin{equation}\label{renewal_eq}
\psi(x)= \bar{L}(x)+\int_0^x \psi(x-y)dL(y),
\end{equation}
where
\begin{equation}
L(x):=\int_0^x \frac{\alpha}{c} \bar{G}(y)dy,\quad x\ge 0,
\label{expression_L(x)}
\end{equation}
can be seen as the cdf of a defective distribution (because $L(+\infty)=\frac{\alpha}{c} m<1$), 
see e.g. Corollary 3.3 p.79 of Asmussen and Albrecher \cite{AA} or Eq. (7.2) p.377 of Feller \cite{Feller}.

We introduce the probability distribution $dF(x)=e^{\kappa x}dL(x)$ and use the notation
\begin{multline}
g(z):=\int_0^\infty e^{zx}dF(x)=\frac{\alpha}{c} \frac{1}{z+\kappa}\EE \left( e^{(z+\kappa) Z}-1\right),\\
\mu:=\int_0^\infty xdF(x) \quad
\mbox{and}\quad \mu_2:=\int_0^\infty x^2dF(x)
\label{def_g_ruin}
\end{multline}
for the moment generating function and the first and second moments of $F$. Elementary calculation yields
\begin{equation}
g'(z_j)=\frac{1}{z_j+\kappa}\left[\frac{\alpha}{c} \EE \left( Ze^{(z_j+\kappa)Z}\right)-1\right].
\label{g(zj)_prime_ruin}
\end{equation}
Thanks to the assumption $\bar G\in\mathcal{D}$ and Lemma \ref{lemme_D}, the tail function of $F$ belongs also to $\mathcal{D}$ whence the moment generating function $g(z)$ is defined for all $z\in\nbC$. 
Setting  $\psi_\kappa(x):=e^{\kappa x}\psi(x)$, \eqref{renewal_eq} entails the non-defective  renewal equation
\begin{equation}\label{renewal_eq_dieze}
\psi_\kappa(x)= z(x)+\int_0^x \psi_\kappa(x-y)dF(y),\quad z(x):= e^{\kappa x}\bar{L}(x),
\end{equation}
see Feller \cite{Feller} Eq. (6.12) p.376. The distribution $F$ is non-lattice so that Smith's renewal theorem entails the classical Cramer-Lundberg asymptotics:
\begin{equation}
\psi_\kappa(x) \to C:=\frac{\int_0^\infty z(y) dy}{\int_0^\infty ydF(y)} =\frac{c-\alpha m}{c-\alpha \EE\left( Z_1 e^{\kappa Z_1}\right)}\quad \mbox{as } x\to+\infty,
\label{constant_C}
\end{equation}
the last equality being obtained by direct computation, see e.g.  Theorem 5.3 p.86 of \cite{AA}. 

We now wish to provide an expansion of $\psi_\kappa(x)$ and provide extra terms in (\ref{constant_C}). Let us introduce
$$
U(x)=\sum_{n=0}^\infty F^{\ast (n)}(x),\quad v(x)=U(x)-\frac{x}{\mu}-\frac{\mu_2}{2\mu^2},\quad x\ge 0.
$$
The solution to (\ref{renewal_eq_dieze}) is given by
\begin{equation}
\psi_\kappa(x)= \int_0^x z(x-y) dU(y) = \int_0^x z(x-y) \frac{dy}{\mu}+\int_0^x z(x-y) dv(y).\label{renewal_explicit}
\end{equation}
Using the fact that $\int_0^x z(x-y)dy=\int_0^x z(y)dy$, an integration by parts and the definition of $C$, we obtain 
\begin{eqnarray}
 \psi_\kappa(x)-C&=& -\int_x^\infty z(y)\frac{dy}{\mu} + \int_0^x z'(x-y) v(y)dy- v(0-)z(x) + \frac{\alpha}{c} m v(x)\nonumber\\
 &:=& I_1(x) + I_2(x) + I_3 (x)+I_4(x).
\label{expansion_ruin1}
\end{eqnarray}
We recall that $x\mapsto \bar{G}(x)$ belongs to $\cal D$ defined in (\ref{def_cal_D}), so that one easily sees that $z(x)$ defined in (\ref{renewal_eq_dieze}) belongs to $\cal D$, and that $I_1(x)$ and $I_3(x)$ are $ o(e^{-rx})$ from Lemma \ref{lemme_D}. Thus we only need to study asymptotics of the terms $I_2(x)$ and $I_4(x)$ as $x\to \infty$ in (\ref{expansion_ruin1}).

We start with $I_4(x)$. Let us note that, as $g(z)$ is defined for all $z\in\nbC$, $R$ defined in (\ref{def_R}) is equal to $+\infty$, as well as $R_0$ in Theorem \ref{thm_expansion_v}. Expansion (\ref{expansion_v}) in Theorem \ref{thm_expansion_v} yields
\begin{equation}
I_4(x)=\sum_{j=1}^N \frac{\alpha}{c} m \mathrm{Res}\left(\frac{e^{-xz}}{z(1-g(z))};z_j\right) + o(e^{-rx})
\label{exp_I4_1}
\end{equation}
Now turning to $I_2(x)$. By a change of variable one has that $I_2(x)=\int_0^x z'(y) v(x-y)dy$, so that, using Expansion (\ref{expansion_v}) in Theorem \ref{thm_expansion_v} for $v(x-y)$ yields
\begin{equation}
I_2(x)= \sum_{j=1}^N \int_0^x z'(y)\mathrm{Res}\left(\frac{e^{-(x-y)z}}{z(1-g(z))};z_j\right)dy+\int_0^x z'(y) e^{-r(x-y)}\varepsilon(x-y)dy 
\label{exp_I2_1}
\end{equation}
for some function $\varepsilon(.)$ vanishing at $+\infty$. Furthermore, it is easy to check that $x\mapsto z'(x) \in {\cal D}$. One then verifies that $\int_x^\infty z'(y)\mathrm{Res}\left(\frac{e^{-(x-y)z}}{z(1-g(z))};z_j\right)dy$ is an $o(e^{-rx})$, by writing the residue as an integral over a fixed contour including $z_j$ then using Lemma \ref{lemme_D}. One also checks that the last term on the right hand side of (\ref{exp_I2_1}) is $o(e^{-rx})$ by a dominated convergence argument. One may thus write in (\ref{exp_I2_1})
\begin{equation}
I_2(x)= \sum_{j=1}^N \int_0^\infty z'(y)\mathrm{Res}\left(\frac{e^{-(x-y)z}}{z(1-g(z))};z_j\right)dy+o(e^{-rx}).
\label{exp_I2_1_bis}
\end{equation}
Deriving $z(y)=e^{\kappa y}\bar{L}(y)$ with  $L(y)$ given in (\ref{expression_L(x)}), expressing the residue as an integral over a fixed contour and using Fubini, thus yields the following expression for the terms in the summation on the right hand side of (\ref{exp_I2_1_bis}) :
\begin{eqnarray*}
&& \int_0^\infty z'(y)\mathrm{Res}\left(\frac{e^{-(x-y)z}}{z(1-g(z))};z_j\right)dy\nonumber\\
&& =\mathrm{Res}\left(\frac{e^{-x z}}{z(1-g(z))} \int_0^\infty z'(y) e^{y z}dy;z_j\right)\nonumber\\
&& =\mathrm{Res}\left(\frac{e^{-x z}}{z(1-g(z))} \left[\int_0^\infty \kappa e^{(z+\kappa)y}\int_y^\infty \frac{\alpha}{c} \bar{G}(v)dv dy \right.\right. \nonumber\\
&& - \left.\left.\int_0^\infty e^{(z+\kappa)y} \frac{\alpha}{c} \bar{G}(y)dy\right];z_j\right)\nonumber\\
&&=  \mathrm{Res}\left(\frac{e^{-x z}}{z(1-g(z))} \left[ \frac{\alpha}{c} \EE\left( \int_0^\infty \int_0^\infty \kappa e^{(z+\kappa)y} \nbu_{[Z\ge v\ge y]} dy dv \right)\right. \right.\nonumber\\
&& - \left. \left.\frac{\alpha}{c} \EE\left( \int_0^\infty e^{(z+\kappa)y} \nbu_{[Z\ge y]}dy\right)\right];z_j\right)\nonumber\\
&&= \mathrm{Res}\left(\frac{e^{-x z}}{z(1-g(z))} \left[ \frac{\alpha}{c} \frac{\kappa}{z+\kappa}\EE\left( \frac{e^{(z+\kappa)Z}-1}{z+\kappa}-Z\right)\right.\right.\nonumber\\
&& - \left. \left.\frac{\alpha}{c}\frac{1}{z+\kappa}\EE\left( e^{(z+\kappa)Z}-1\right)\right];z_j\right),
\end{eqnarray*}
which, since $z_j$ satisfies $g(z_j)=1$ with $g(z)$ given by (\ref{def_g_ruin}), yields
\begin{eqnarray}
&&\int_0^\infty z'(y)\mathrm{Res}\left(\frac{e^{-(x-y)z}}{z(1-g(z))};z_j\right)dy\nonumber\\
&&= \mathrm{Res}\left(\frac{e^{-x z}}{z(1-g(z))} \left[ \frac{\kappa}{z+\kappa} - \frac{\alpha}{c} \frac{\kappa}{z+\kappa} m -1 \right];z_j\right)\nonumber\\
&&=   \mathrm{Res}\left(\frac{e^{-x z}}{z(1-g(z))} \frac{[-z - \frac{\alpha}{c} \kappa m]}{z+\kappa};z_j\right).\label{exp_I2_2}
\end{eqnarray}
Plugging (\ref{exp_I2_2}) in (\ref{exp_I2_1_bis}) and summing $I_2(x)$ and Expression (\ref{exp_I4_1}) of $I_4(x)$ then yields from (\ref{expansion_ruin1})
$$
\psi_\kappa(x)-C = \sum_{j=1}^N \mathrm{Res}\left(\frac{e^{-x z}}{1-g(z)} \frac{[-1+\frac{\alpha}{c}   m]}{z+\kappa};z_j\right)+ o(e^{-rx}).
$$
Let us now note that in the case where $g'(z_j)\neq 0$, the residue in the $j$th term in the above can be explicitly given and, thanks to (\ref{g(zj)_prime_ruin}), is equal to (\ref{g_prime_not_zero_continuous_risk}). Let us also note that expression of $C$ provided in (\ref{constant_C}) is but $-\frac{-1+\frac{\alpha}{c}   m}{ \frac{\alpha}{c} \EE \left( Ze^{(z_0+\kappa)Z}\right)-1}$ with $z_0=0$ and that $g'(z_0)\neq 0$, so that the above yields Expansion (\ref{exp_ruin_new}).
\end{proof}

\subsection{Proof of Theorem~\ref{thm:ruin_discret}}
We let $b(k):=\PP(Z_1=k)$, $k\in\nbN$, the probability mass distribution of $Z_1$ and denote its survival function by $ l(k):=\PP[Z_1\ge k+1]=\sum_{j=k+1}^\infty b(k)$.
Let us in particular note that $b(0)$ can be positive. In other words, there is a possibility that there is no claim (or equivalently, a claim of size $0$) at time $n$. We define
$$
{\cal D}_d:=\{ f:\nbN\longrightarrow \nbC|\ f(k)=o(e^{-sk})\ \forall s>0 \}
$$
and suppose here that the $Z_k$'s verify that $k\mapsto l(k) \in {\cal D}_d$ .
we let $\kappa>0$ its unique real positive solution.

 The following lemma is the discrete analog of Lemma \ref{lemme_D}:
\begin{lemm}\label{lemme_D_d}
Let $f\in {\cal D}_d$, then
\begin{enumerate}
	\item $k\in\nbN\mapsto e^{zk}f(k)$ belongs to ${\cal D}_d$ for all $z\in\nbC$,
	\item $k\in\nbN\mapsto \displaystyle \sum_{j=k}^\infty f(j)$ belongs to ${\cal D}_d$.
\end{enumerate}
\end{lemm}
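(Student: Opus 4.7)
The plan is a direct verification of the two closure properties from the definition of $\mathcal{D}_d$, using nothing more than elementary inequalities. The statement is a discrete analogue of Lemma~\ref{lemme_D} and should be provable in a few lines.

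For the first assertion, I would fix an arbitrary $s>0$ and $z\in\nbC$, and show that $e^{zk}f(k)=o(e^{-sk})$. Writing $|e^{zk}f(k)|=e^{\Re(z)k}|f(k)|$, I set $t:=s+\Re(z)$. If $t>0$, then the assumption $f\in\mathcal{D}_d$ applied with parameter $t$ yields $|f(k)|=o(e^{-tk})$, so $e^{sk}|e^{zk}f(k)|=e^{tk}|f(k)|\to 0$. If $t\le 0$, then $e^{tk}\le 1$ for $k\ge 0$, and since $f\in\mathcal{D}_d$ (applied with any positive parameter) forces $f(k)\to 0$, we still obtain $e^{sk}|e^{zk}f(k)|\le|f(k)|\to 0$. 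As $s>0$ was arbitrary, the map $k\mapsto e^{zk}f(k)$ lies in $\mathcal{D}_d$.

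For the second assertion, write $F(k):=\sum_{j=k}^{\infty}f(j)$ and fix $s>0$. Choose any $s'>s$; by assumption there exists a constant $C=C(s')$ such that $|f(j)|\le C e^{-s'j}$ for all $j\in\nbN$. A geometric series bound then gives
\[
|F(k)|\le \sum_{j=k}^{\infty} Ce^{-s'j}=\frac{C}{1-e^{-s'}}\,e^{-s'k},\quad k\in\nbN,
\]
so $e^{sk}|F(k)|\le \frac{C}{1-e^{-s'}}e^{-(s'-s)k}\to 0$. Since $s>0$ was arbitrary, $F\in\mathcal{D}_d$. The finiteness of $F(k)$ is automatic from the same bound.

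There is no genuine obstacle here; the only minor subtlety is that $\Re(z)$ may be negative in part (1), which is handled by noting that $\mathcal{D}_d$ already forces $f(k)\to 0$. Both parts rely only on the qualitative decay built into the definition of $\mathcal{D}_d$ and on the absolute convergence of a geometric tail.
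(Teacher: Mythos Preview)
Your proof is correct. The paper does not actually prove this lemma; it merely states it as the discrete analogue of Lemma~\ref{lemme_D}, which itself is introduced with the remark that it ``can be verified easily'' and is likewise left without proof. Your direct verification via the definition and a geometric tail bound is exactly the kind of routine check the authors had in mind.
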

Similarly to (\ref{expression_L(x)}), (\ref{Lundberg}) and (\ref{def_g_ruin}) we define the following discrete measures $F(.)$, associated probability mass function $f(.)$ and complex valued function $g(.)$ which we recall here:
\begin{eqnarray}
F(x)&:=& \sum_{k=0}^x f(k),\ x\in\nbN,\quad f(k):=e^{\kappa k}l(k)\nonumber\\
&=&e^{\kappa k} \PP(Z\ge k+1),\ k\in\nbN ,\label{def_f_ruin_discrete}\\
g(z) &:=& \int_0^\infty e^{z x}dF(x) = \sum_{k=0}^\infty e^{(z+\kappa) k}\PP(Z\ge k+1)\nonumber\\
&=&\frac{1-\EE(e^{(z+\kappa)Z})}{1-e^{z+\kappa}},\quad z\in\nbC .\label{def_g_ruin_discrete}
\end{eqnarray}
$g(z)$ is thus defined for all $z\in \nbC$ and is equal to $\EE (e^{zX})$ for some integer valued r.v. $X$ with probability mass function $f(.)$, and with expectation and second moment $\EE(X):=\mu$ and $\EE(X^2):=\mu_2$.

\begin{proof}[Proof of Theorem~\ref{thm:ruin_discret}]
We proceed along the lines of Proof of Proposition \ref{prop_exp_ruin}. Setting $\bar{L}(x):=\sum_{y=x}^\infty l(y)$, $x\in\nbN$, the discrete time analog of (\ref{renewal_eq}) is
\begin{equation}
\psi(x)=\sum_{k=0}^x \psi(x-k)l(k) + \bar{L}(x),\quad x\in\nbN ,
\label{renewal_eq_discrete}
\end{equation}
see e.g. Proposition 1.2 p.488 in \cite{AA}. Letting $\psi_\kappa(x):=e^{\kappa x}\psi(x)$ and $z(x):=e^{\kappa x} \bar{L}(x)$, and by the definition of $f(k)$ for all $k$ in (\ref{def_f_ruin_discrete}), we obtain from (\ref{renewal_eq_discrete}) the renewal equation
\begin{equation}\label{renewal_eq_dieze_discrete}
\psi_\kappa(x)= z(x)+ \sum_{k=0}^x \psi_\kappa(x+1-k)f(k).
\end{equation}
Since $X$ is lattice, the corresponding Smith's renewal theorem implies that
\begin{equation}
\psi_\kappa(x)\longrightarrow C:=\frac{1}{\mu}\sum_{k=0}^\infty z(k)=\frac{m-e^\kappa}{e^\kappa-\EE(Ze^{\kappa Z})},\quad x\to\infty ,
\label{constant_C_discrete}
\end{equation}
the last equality can be verified by direct computation. Let us note in passing that (\ref{constant_C_discrete}) provides the Cramer Lundberg asymptotics $\psi(x)\sim \frac{m-e^\kappa}{e^\kappa-\EE(Ze^{\kappa Z})}e^{-\kappa x}$ for $x\to \infty$, $x\in\nbN$, in the discrete case. Defining now
$$
U(j):=\sum_{n=0}^\infty F^{\ast n}(j),\quad u_j:=U(j)-U(j-1),\quad v(j):=U(j) - \frac{j}{\mu}- \frac{\mu_2+\mu^2}{2 \mu^2},\quad j\in\nbN 
$$
with $U(-1)=0$, solution to (\ref{renewal_eq_dieze_discrete}) is given by
\begin{eqnarray*}
\psi_\kappa(x)&=&\sum_{k=0}^x z(x-k)u_k = \sum_{k=0}^x z(x-k) \left[\frac{1}{\mu}+ v(k)-v(k-1)\right],\\
&=& \frac{1}{\mu} \sum_{k=0}^x z(x-k) + \sum_{k=0}^x z(x-k) v(k) - \sum_{k=-1}^{x-1} z(x-k-1)v(k),\\
&=& \frac{1}{\mu} \sum_{k=0}^x z(k) +\sum_{k=0}^{x-1} [z(x-k)-z(x-k-1)]v(k) + z(0)v(x) - z(x) v(-1).
\end{eqnarray*}
Substracting in the above constant $C$ given by (\ref{constant_C_discrete}) yields
\begin{equation}
\psi_\kappa(x)-C= - \frac{1}{\mu} \sum_{k=x+1}^\infty z(k)+\sum_{k=0}^{x-1} [z(x-k)-z(x-k-1)]v(k) + z(0)v(x) - z(x) v(-1).
\label{expansion_ruin1_discrete}
\end{equation}
The first and last term on the right hand side of (\ref{expansion_ruin1_discrete}) are $o(e^{-rk})$ by Lemma \ref{lemme_D_d}, hence we are interested in the following quantities
\begin{eqnarray*}
I_1(x)&:=&\sum_{k=0}^{x-1} [z(x-k)-z(x-k-1)]v(k)=\sum_{k=1}^{x} [z(k)-z(k-1)]v(x-k),\\
I_2(x)&:=& z(0)v(x)=\bar{L}(0) v(x)=m v(x).
 \end{eqnarray*}
Thanks to Theorem \ref{thm_expansion_v_lattice}, one has that
\begin{equation}
I_2(x)=\sum_{j=0}^N \mathrm{Res}\left(\frac{e^{-kz}}{(e^z-1)(1-g(z))};z_j\right)+ o(e^{-rx})
\label{I2_discrete}
\end{equation}
and we then turn to $I_1(x)$. Writing
\begin{eqnarray}
z(k)-z(k-1)&=&e^{\kappa k} \bar{L}(k)-e^{\kappa (k-1)} \bar{L}(k-1)\nonumber\\
&=&[1-e^{-\kappa}]e^{\kappa k}\bar{L}(k) +e^{\kappa (k-1)} [\bar{L}(k) - \bar{L}(k-1)]\nonumber\\
&=& [1-e^{-\kappa}]e^{\kappa k}\bar{L}(k) - e^{\kappa (k-1)} l(k-1)\nonumber\\
&=& [1-e^{-\kappa}] z(k)- e^{\kappa (k-1)} l(k-1),
\label{I1_disctete1}
\end{eqnarray}
then writing, in view of expansion (\ref{expansion_v_lattice}),
$$
v(x-k)= \sum_{j=1}^N \mathrm{Res}\left(\frac{e^{-(x-k)z}}{(e^z-1)(1-g(z))};z_j\right) + \varepsilon_{x-k} e^{-r(x-k)}
$$
for some $(\varepsilon_n)_{n\in\nbN}$ vanishing at $\infty$, one obtains, 
\begin{eqnarray} 
\quad \ \ I_1(x)&=& \sum_{j=1}^N \mathrm{Res}\left[ \frac{1}{(e^{z}-1)(1-g(z)}\sum_{k=1}^x [1-e^{-\kappa}] e^{\kappa k}\bar{L}(k) e^{-(x-k)z};z_j \right]\label{I1_disctete2}\\
&-& \sum_{j=1}^N \mathrm{Res}\left[ \frac{1}{(e^{z}-1)(1-g(z)}\sum_{k=1}^{x} e^{\kappa (k-1)} l(k-1) e^{-(x-k)z};z_j \right]\label{I1_disctete3}\\
&+& \sum_{k=1}^x [z(k)-z(k-1)] \varepsilon_{x-k} e^{-r(x-k)}.
\label{I1_disctete4}
\end{eqnarray}
Using Lemma \ref{lemme_D_d} as well as a dominated convergence theorem, it is not hard to see that (\ref{I1_disctete4}) is an $o(e^{-rx})$ as $x\to \infty$, $x\in\nbN$. We then study (\ref{I1_disctete2}) and (\ref{I1_disctete3}). One verifies that $x\in\nbN\mapsto \mathrm{Res}\left[\frac{1}{(e^{z}-1)(1-g(z)}\sum_{k=x+1}^{\infty} e^{\kappa (k-1)} l(k-1) e^{k z};z_j\right]\in {\cal D}_d$ by writing the residue as an integral over a fixed contour including $z_j$ then using Lemma \ref{lemme_D_d}, so that in (\ref{I1_disctete3}) we compute
\begin{eqnarray}
&&\mathrm{Res}\left[\frac{1}{(e^{z}-1)(1-g(z))}e^{-x z}\sum_{k=1}^{x} e^{\kappa (k-1)} l(k-1) e^{k z};z_j \right]\nonumber\\
&= &\mathrm{Res}\left[\frac{1}{(e^{z}-1)(1-g(z))}e^{-x z} e^{z}\sum_{k=0}^{\infty}e^{(z+\kappa) k}\PP(Z\ge k+1) ;z_j\right]\nonumber\\
&-&\mathrm{Res}\left[\frac{1}{(e^{z}-1)(1-g(z))} e^{-x z} \sum_{k=x+1}^{\infty} e^{\kappa (k-1)} l(k-1) e^{k z};z_j\right]\nonumber\\
&=& \mathrm{Res}\left[\frac{1}{(e^{z}-1)(1-g(z))} e^{-x z}e^{z}g(z);z_j\right]+ o(e^{-rx}),\quad j=1,...,N.
\label{I1_disctete5}
\end{eqnarray}
Next, we compute, for all $z$,
\begin{eqnarray}
\sum_{k=1}^\infty   e^{(\kappa +z) k}\bar{L}(k) &=& \sum_{k=1}^\infty \sum_{n=k}^\infty e^{(\kappa +z) k} l(n)=\sum_{n=1}^\infty \left[\sum_{k=1}^{n} e^{(\kappa +z) k}\right] l(n)\nonumber\\
&=& \sum_{n=1}^\infty \left[ e^{\kappa +z} \frac{1-  e^{(\kappa +z) n}}{1-e^{\kappa +z} }\right] l(n)\nonumber\\
&=& \frac{e^{\kappa +z} }{1-e^{\kappa +z} } \sum_{n=1}^\infty l(n)- \frac{1}{1-e^{\kappa +z}  }e^{\kappa+z} \sum_{n=1}^\infty   e^{(\kappa+ z) n} l(n)\nonumber\\
&=& \frac{e^{\kappa +z} }{1-e^{\kappa +z} } [m-l(0)] - \frac{1}{1-e^{\kappa +z}  }e^{\kappa+z} [g(z)-l(0)]\nonumber\\
&=& \frac{e^{\kappa +z}}{1-e^{\kappa +z}}[m-g(z)],
\label{I1_disctete6}
\end{eqnarray}
One verifies thanks to Lemma \ref{lemme_D_d} and writing the residue as a contour around $z_j$ that $x\in\nbN\mapsto \mathrm{Res}\left[\frac{1}{(e^{z}-1)(1-g(z)} \sum_{k=x+1}^\infty   e^{(\kappa +z) k}\bar{L}(k);z_j\right]\in {\cal D}_d$, so that inserting (\ref{I1_disctete6}) in (\ref{I1_disctete2}) yields
\begin{multline}
\mathrm{Res}\left[\frac{1}{(e^{z}-1)(1-g(z))} \sum_{k=1}^x [1-e^{-\kappa}] e^{\kappa k}\bar{L}(k) e^{-(x-k)z};z_j\right]\\
=\mathrm{Res}\left[\frac{1}{(e^{z}-1)(1-g(z))} e^{-x z}\frac{e^{\kappa +z}}{1-e^{\kappa +z}}[m-g(z)];z_j\right]+ o(e^{-rx}).
\label{I1_discrete30}
\end{multline}
Respectively inserting (\ref{I1_disctete5}) and (\ref{I1_discrete30}) in (\ref{I1_disctete3}) and (\ref{I1_disctete2}) in $I_1(x)$, and adding $I_2(x)$ in (\ref{I2_discrete}), yields
\begin{eqnarray}
I_1(x)+I_2(x)&=& \sum_{j=1}^N \mathrm{Res}\left[\frac{1-e^{-\kappa}}{(e^{z}-1)(1-g(z))}  \frac{e^{\kappa +z}}{1-e^{\kappa +z}}[m-g(z)]  e^{-x z};z_j\right]\nonumber\\
&-& \sum_{j=1}^N \mathrm{Res}\left[ \frac{g(z)}{(e^{z}-1)(1-g(z))} e^{z} e^{-x z};z_j\right]\nonumber\\
&+& \sum_{j=0}^N \mathrm{Res} \left[ \frac{m}{(e^{z}-1)(1-g(z))}e^{-x z};z_j\right]+ o(e^{-rx}) ,\nonumber\\
&=& -\sum_{j=1}^N \mathrm{Res} \left[\frac{1}{1-g(z)} \frac{m-g(z)e^{\kappa+z}}{1-e^{\kappa+z}} e^{-x z};z_j\right]+o(e^{-rx}). \label{I1_I2_discrete}
\end{eqnarray}
Plugging (\ref{I1_I2_discrete}) into (\ref{expansion_ruin1_discrete}) yields (\ref{exp_ruin_new}), provided that we prove that residue of $-\frac{1}{1-g(z)} \frac{m-g(z)e^{\kappa+z}}{1-e^{\kappa+z}} e^{-x z}$ at $z=z_0=0$ equals $Ce^{-\kappa x}$, $C$ given by (\ref{constant_C_discrete}). This computation is in fact included in the case where the $g'(z_j)$'s are non zero, in which case $j$th term of (\ref{I1_I2_discrete}) is equal to (\ref{expression_term_ruin_discrete}), which we proceed to consider now. Direct computation from (\ref{def_g_ruin_discrete}) coupled to the fact that $g(z_j)=1$ gives
\begin{multline*}
g'(z_j)= \frac{-\EE\left(Z e^{(\kappa+z_j)Z}\right)[1- e^{\kappa+z_j}]+[1-\EE(e^{(\kappa+z_j)Z})]e^{\kappa+z_j}}{(1- e^{\kappa+z_j})^2}\\
=-\frac{\EE\left(Z e^{(\kappa+z_j)Z}\right)}{1- e^{\kappa+z_j}}+ \frac{e^{\kappa+z_j}}{1- e^{\kappa+z_j}}
\end{multline*}
so that it is easy to verify that $\mathrm{Res} \left[\frac{1}{1-g(z)} \frac{m-g(z)e^{\kappa+z}}{1-e^{\kappa+z}} e^{-x z};z_j\right]$ is (\ref{expression_term_ruin_discrete}) times $e^{-(\kappa+z_j)x}$.

\end{proof}

\subsection{Proof of Proposition~\ref{Prop_2dim}}
\begin{proof}[Proof of Proposition~\ref{Prop_2dim}]
We recall that two functions $f$ and $h$ satisfy $f(x)\gg h(x)$ iff $h(x)/f(x)$ tends to $0$ as $x\to\infty$. In the following we consider cases where $q>0$ is such that functions respectively satisfy
\begin{eqnarray}
&& C_0^1 e^{-\kappa_1 x} \gg \Re\left[C_1^1 e^{-(\kappa_1+z^1_1) x}\right] \gg \left\{C_0^2 e^{-q\kappa_2 x} ,\ \Re\left[C_1^2 e^{-q(\kappa_2+z^2_1) x}\right]\right\},\label{case1_2dim}\\
&& C_0^1 e^{-\kappa_1 x} \gg C_0^2 e^{-q\kappa_2 x} \gg \left\{\Re\left[C_1^1 e^{-(\kappa_1+z^1_1) x}\right],\ \Re\left[C_1^2 e^{-q(\kappa_2+z^2_1) x}\right]\right\},\label{case2_2dim}
\end{eqnarray}
all other cases being treated similarly. We will use the following inequalities (an easy and direct consequence of Lemma 2 (i) of \cite{APP})
\begin{equation}
\psi_1(x_1)\le \psi_{\small \mbox{or}}(x_1,x_2) \le \psi_1(x_1)+\psi_2(x_2).
\label{ineq_APP}
\end{equation}
Let us first consider the case (\ref{case1_2dim}). Using expansions (\ref{two_dim_expansions}) with $x_1=x$, $x_2=qx$, as well as (\ref{ineq_APP}) we get
\begin{multline}
\varepsilon_1(x)e^{-(r+\kappa_1)x}\le \psi_{\small \mbox{or}}(x,qx)-  C_0^1 e^{-\kappa_1 x}
-\Re\left[C_1^1 e^{-(\kappa_1+z^1_1) x}\right] 
\\\le C_0^2 e^{-q\kappa_2 x} + \Re\left[C_1^2 e^{-q(\kappa_2+z^2_1) x}\right] + \varepsilon_1(x)e^{-(r+\kappa_1)x}+ \varepsilon_2(qx)e^{-(r+\kappa_1)qx}.\label{expansion_2dim_1}
\end{multline}
We now note that the terms on the far left and right hand side of (\ref{expansion_2dim_1}) divided by $e^{-\Re (\kappa_1+z_1^1)x}$ tend to $0$ because of Assumption (\ref{case1_2dim}), which proves~(\ref{two_terms_two dim}).\\
We now consider case (\ref{case2_2dim}). Using again (\ref{ineq_APP}) we get, similarly to (\ref{expansion_2dim_1}),
\begin{multline}
\varepsilon_1(x)e^{-(r+\kappa_1)x} + \Re\left[C_1^1 e^{-(\kappa_1+z^1_1) x}\right] - C_0^2 e^{-q\kappa_2 x} \le \psi_{\small \mbox{or}}(x,qx)-  C_0^1 e^{-\kappa_1 x} - C_0^2 e^{-q\kappa_2 x} 
\\\le \Re\left[C_1^1 e^{-(\kappa_1+z^1_1) x}\right]  + \Re\left[C_1^2 e^{-q(\kappa_2+z^2_1) x}\right] + \varepsilon_1(x)e^{-(r+\kappa_1)x}+ \varepsilon_2(qx)e^{-(r+\kappa_1)qx}.\label{expansion_2dim_2}
\end{multline}
Because of Assumption (\ref{case2_2dim}), we get from (\ref{expansion_2dim_2}) that
\begin{multline*}
-1=\lim_{x\to\infty}\frac{\varepsilon_1(x)e^{-(r+\kappa_1)x} + \Re\left[C_1^1 e^{-(\kappa_1+z^1_1) x}\right] - C_0^2 e^{-q\kappa_2 x}}{C_0^2 e^{-q\kappa_2 x}}\\
\le \liminf_{x\to\infty}\frac{\psi_{\small \mbox{or}}(x,qx)-  C_0^1 e^{-\kappa_1 x} - C_0^2 e^{-q\kappa_2 x}}{C_0^2 e^{-q\kappa_2 x}}\\
\le \limsup_{x\to\infty}\frac{\psi_{\small \mbox{or}}(x,qx)-  C_0^1 e^{-\kappa_1 x} - C_0^2 e^{-q\kappa_2 x}}{C_0^2 e^{-q\kappa_2 x}}\\
\le \lim_{x\to\infty}\frac{\Re\left[C_1^1 e^{-(\kappa_1+z^1_1) x}\right]  + \Re\left[C_1^2 e^{-q(\kappa_2+z^2_1) x}\right] + \varepsilon_1(x)e^{-(r+\kappa_1)x}+ \varepsilon_2(qx)e^{-(r+\kappa_1)qx}}{C_0^2 e^{-q\kappa_2 x}}
\\=0,
\end{multline*}
which proves (\ref{two_terms_two dim}).
\end{proof}
{\bf Acknowledgment.} The authors wish to thank an anonymous referee for pointing out the informal proof given at the end of Section \ref{sec:main_results}, comments leading to Remark \ref{comment_referee}, as well as reference \cite{KP} leading to the infinite expansion for $\psi(x)$ in Example \ref{ex_stop_loss}. This work was partially supported by the French National Agency of Research ANR Grant, project AMMSI, number ANR-2011-BS01-021.

\end{document}